\documentclass[a4paper] {article}

\usepackage{color}
\usepackage{amsmath} 
\usepackage{amssymb}
\usepackage{amsthm}
\usepackage{graphicx}

\topmargin -1.0cm
\oddsidemargin 0in
\evensidemargin 0in
\textwidth 6.3 truein
\textheight 9.1 truein

\def\+{\oplus}

\newcommand{\R}{\mathbb{R}}
\newcommand{\N}{\mathbb{N}}

\newcommand{\T}{\mathbb{T}}

\newcommand{\cF}{{\mathcal F}}
\newcommand{\cG}{{\mathcal G}}

\newcommand{\cJ}{{\mathcal J}}
\newcommand{\cA}{{\mathcal A}}
\newcommand{\cB}{{\mathcal B}}

\newcommand{\cC}{{\mathcal C}}

\newcommand{\cM}{{\mathcal M}}
\newcommand{\cP}{{\mathcal P}}

\newcommand{\cK}{{\mathcal K}}

\DeclareMathOperator{\EE}{\mathbb{E}}
\DeclareMathOperator{\PP}{\mathbb{P}}

\newcommand{\diver}{{\rm{div}}}
\newcommand{\ds}{\displaystyle}
\def\squareforqed{\hbox{\rlap{$\sqcap$}$\sqcup$}}
\def\qed{\ifmmode\else\unskip\quad\fi\squareforqed}
\def\smartqed{\def\qed{\ifmmode\squareforqed\else{\unskip\nobreak\hfil
\penalty50\hskip1em\null\nobreak\hfil\squareforqed
\parfillskip=0pt\finalhyphendemerits=0\endgraf}\fi}}

\newtheorem{remark}{\textbf{Remark}}[section]

\newtheorem{lemma}{\textbf{Lemma}}[section]
\newtheorem{theorem}{\textbf{Theorem}}[section]

\newtheorem{proposition}{\textbf{Proposition}}[section]

\newtheorem{definition}{\textbf{Definition}}[section]

\numberwithin{equation}{section}
\title{Mean field type control with congestion}
\author{Yves Achdou \thanks { Univ. Paris Diderot, Sorbonne Paris Cit{\'e}, Laboratoire Jacques-Louis Lions, UMR 7598, UPMC, CNRS, F-75205 Paris, France.
 achdou@ljll.univ-paris-diderot.fr},
Mathieu Lauri{\`e}re \thanks { Univ. Paris Diderot, Sorbonne Paris Cit{\'e}, Laboratoire Jacques-Louis Lions, UMR 7598, UPMC, CNRS, F-75205 Paris, France.}
}

\begin{document}
\maketitle
\begin{abstract}
  We analyze some systems of partial differential equations arising in the theory of mean field type control with congestion effects.  We look for weak solutions. Our main result is the existence and uniqueness of suitably defined weak solutions, which are characterized as the optima of two optimal  control problems in duality. 
\end{abstract}
\section{Introduction}
In the recent years,  an important research activity has been devoted to the study of stochastic differential games  with a large number of players.
In their pioneering articles  \cite{MR2269875,MR2271747,MR2295621}, J-M. Lasry and P-L. Lions have introduced the notion of mean field games, 
which describe the   asymptotic behavior of  stochastic differential games (Nash equilibria) as the number $N$ of players
tends to infinity.  In  these models, it is assumed that the agents are all identical and that 
an individual agent can hardly influence the outcome of the game.  Moreover, each individual strategy is influenced by some averages of functions of 
the states of the other agents. In the limit when $N\to +\infty$, a given agent feels the presence of the other agents through the 
statistical distribution of the states of the other players. Since perturbations of a single agent's strategy does not influence the statistical distribution of the states, 
the latter acts as a parameter  in the control problem to be solved by each  agent.  
\\
Another kind of asymptotic regime is obtained by  assuming that all the agents use the same distributed feedback strategy
 and by passing  to the limit as $N\to \infty$ before optimizing the common feedback. Given a common feedback strategy, the asymptotics are 
given by the McKean-Vlasov theory, \cite{MR0221595,MR1108185} : the dynamics of a given agent is found by solving  a stochastic differential equation with coefficients depending on 
a mean field, namely the statistical distribution of the states, which may also affect the objective function. Since the feedback strategy is common to all agents, perturbations of the latter affect the mean field.  Then, having each player optimize its objective function amounts to solving a control problem
 driven by the McKean-Vlasov dynamics. The latter is named control of McKean-Vlasov dynamics by R. Carmona and F. Delarue \cite{MR3045029,MR3091726} and mean field type control by A. Bensoussan et al, \cite{MR3037035,MR3134900}.
\\
When the dynamics of the players are independent stochastic processes, both mean field games and control of  McKean-Vlasov dynamics 
naturally lead to a coupled system of partial differential equations, a forward Fokker-Planck equation
 and a backward Hamilton-Jacobi--Bellman equation.
For mean field games, the coupled system of partial differential equations has been studied by Lasry and Lions in  \cite{MR2269875,MR2271747,MR2295621}. Besides, many important aspects of the mathematical theory developed by  J-M. Lasry and P-L. Lions on MFG are not published in journals or books, but can be found in the videos of  the lectures of P-L. Lions at Coll{\`e}ge de France: see the web site of Coll{\`e}ge de France, \cite{PLL}. One can also see \cite{MR3195844} for a brief survey, and we mention \cite{porretta2014}, a very nice article on weak solutions of Fokker-Planck equations and of  MFG  systems of partial differential equations.
\\
 The analysis of the system of partial differential equations arising from mean field type control can be performed with rather similar arguments as for MFG, see \cite{YAML} for a work devoted to classical solutions. 

\bigskip

The class  of   MFG with congestion effects  was introduced and studied in  \cite{PLL} in 2011, see also \cite{MR3135339, YAML} for some numerical simulations, to model  situations in  which the cost of displacement of the agents increases in the regions where the density is large.
A striking fact is that in general,  MFG with congestion cannot be cast into an optimal control problem driven by a partial differential equation, in contrast with simpler cases. In the present paper, we aim at studying mean field type control with congestion, in a setting in which classical solutions of the system of partial differential equations seem difficult to obtain. 
 But, in contrast with MFG, mean field type control can genuinely be seen as a problem of optimal control of a  partial differential equation.  This will allow us to use techniques from the calculus of variations. Inspired by the works of Cardaliaguet et al,  see \cite{MR3116016,cardaliaguet2014second}, we will introduce a pair of primal and dual optimization problems, leading  to a suitable weak formulation of the system of partial differential equations for which there exists a unique solution. Note that \cite{MR3116016} is devoted to some optimal transportation  problems (i.e. finding the geodesics 
for a class of distances between probability measures), whereas \cite{cardaliaguet2014second} deals with some special cases of MFG with possibly degenerate diffusions to which the above mentioned techniques from the calculus of variations can be applied.

\subsection{Model and assumptions}
\label{sec:model-assumptions}
This paper is devoted to the analysis of the second order system 
\begin{eqnarray}
\label{eq:1}
  \ds
\frac{\partial u} {\partial t} (t,x) + \nu \Delta u(t,x) +  H
(x, m(t,x), D u(t,x)) 
\ds + m(t,x)
 \frac{\partial  H} {\partial m}  
(x,m(t,x), D u(t, x)) 
  = 0 ,
\\
\label{eq:2}
	\ds \frac{\partial m} {\partial t} (t,x)  - \nu \Delta  m(t,x) +
 \diver\Bigl( m(t,\cdot) \frac{\partial H} {\partial p} (\cdot, m(t,\cdot),D u(t,\cdot))\Bigr)(x) =0,
\end{eqnarray}
with the initial and terminal conditions 
\begin{equation}
\label{eq:3}
  m(0,x)=m_0(x)\quad \hbox{and}\quad u(T,x) = u_T(x).
\end{equation}
\paragraph{Assumptions} We now list the assumptions on the Hamiltonian $H$,  the initial and terminal conditions $m_0$ and $u_T$.
These conditions are supposed to hold in all what follows.
\begin{description}
\item[H1] The Hamiltonian $H:  \T^d\times (0,+\infty) \times \R^d \to \R $ is of the form
  \begin{equation}
    \label{eq:4}
    H(x,m,p)= -\frac {|p|^\beta}{m^{\alpha}} +\ell (x,m),
  \end{equation}
  with $1<\beta \le 2$ and $0\le \alpha<1$, and where $\ell $ is continuous cost function that will be discussed below. It is clear that $H$ is concave with respect to $p$.
  Calling $\beta^*$ the conjugate exponent of $\beta$, i.e. $\beta^* =\beta /(\beta-1)$, it is useful to note that
  \begin{eqnarray}
\label{eq:5}
  H(x,m, p)=  \inf_{\xi \in \R^d }   \left( \xi\cdot p +L(x,m,\xi)  \right) ,\\
\label{eq:6}  L(x,m,\xi)=(\beta-1) \beta^{-\beta^*}  m^{\frac \alpha {\beta- 1}} |\xi|^{\beta^*} +\ell (x,m),
  \end{eqnarray}
that $L$ is convex with respect to $\xi$, and that
\begin{equation}\label{eq:7}
  L(x,m,\xi)= \sup_{p\in \R^d} (-\xi\cdot p + H(x,m,p)).
\end{equation}
  \item[H2] (conditions on the cost $\ell$) The function $\ell: \T^d \times \R_+\to \R$ is continuous with respect to both variables and
continuously differentiable with respect to $m$ if $m>0$. We also assume that  $m\mapsto m\ell(x, m)$ is strictly convex, and that 
there exist $q>1$ and two positive constants $C_1$ and $C_2$ such that
\begin{eqnarray}
\label{eq:8}  \frac 1 {C_1 } m^{q-1}- C_1 \le \ell(x,m)\le C_1   m^{q-1}+ C_1,\\
\label{eq:9} \frac 1 {C_2 } m^{q-1}- C_2\le m \frac {\partial\ell }  {\partial m}  (x,m) \le C_2  m^{q-1}+ C_2.
\end{eqnarray}
Moreover, since we can always add a constant to $L$, we can assume that
\begin{equation}
  \label{eq:10}
\ell (x,m)\ge 0,\quad \forall x\in \T^d, \; \forall m\ge 0. 
\end{equation}
The convexity assumption on $m\mapsto m\ell(x, m)$ implies that $m\mapsto mH(x,m, p)$ is strictly convex with respect to $m$.\\
Moreover, we assume that there exists a constant $C_3\ge 0$ such that 
\begin{equation}
  \label{eq:11}
| \ell(x,m)-\ell(y,m)|\le C_3(1+ m^{q-1}) |x-y|.
\end{equation}
\item[H3] We assume that $\beta \ge  q^*$.
\item[H4](initial and terminal conditions) We assume that $m_0$ is of class $\cC^1$ on $\T^d$, that $u_T$ is of class $\cC^2$ on $\T^d$ and that $m_0> 0$ and $\int_{\T^d} m_0(x)dx=1$.
\item[H5]   $\nu$ is a positive number.
\end{description}

\begin{remark}
  \label{sec:assumptions}
Note that Assumption [H5] can be relaxed: all what follows can be generalized to degenerate diffusions, i.e. to the following system of PDEs:
\begin{eqnarray}
\label{eq:1bis}
  \ds
\frac{\partial u} {\partial t} (t,x) + A_{i,j}(x) \frac{\partial^2 u} {\partial x_i\partial x_j}  (t,x) +  H
(x, m(t,x), D u(t,x)) 
\ds + m(t,x)
 \frac{\partial  H} {\partial m}  
(x,m(t,x), D u(t, x)) 
  = 0 ,
\\
\label{eq:2bis}
	\ds \frac{\partial m} {\partial t} (t,x)  - \nu   \frac{\partial^2 } {\partial x_i\partial x_j}   ( A_{i,j}(\cdot) m) (t,x) +
 \diver\Bigl( m(t,\cdot) \frac{\partial H} {\partial p} (\cdot, m(t,\cdot),D u(t,\cdot))\Bigr)(x) =0,
\end{eqnarray}
 where $(A_{i,j})_{1\le i,j\le d}(x) =\frac 1 2 \Sigma(x)\Sigma^T (x)$ and $\Sigma$ is a Lipschitz continous map from $\T^d$ to $\R^{d\times D}$ with $D$ possibly smaller than $d$. The necessary modifications can easily be found in \cite{cardaliaguet2014second}.
\end{remark}

\subsection{A heuristic justification of (\ref{eq:1})-(\ref{eq:3})}
Consider a probability space $(\Omega, \cA, \cP)$ and a filtration $\cF^t$ generated by a $d$-dimensional standard Wiener process $(W_t)$ and
the stochastic process $(X_t)_{t\in [0,T]}$   in $\R^d$ adapted to $\cF^t$ which solves the stochastic differential equation
\begin{equation}\label{eq:12}
  d X_t = \xi_t \;d t + \sqrt{2\nu} \;d W_t \qquad \forall t\in [0,T],
\end{equation}
given the initial state $X_0$ which is a random variable $\cF^0$-measurable whose probability density is  $m_0$.
In (\ref{eq:12}), $\xi_t$ is the control,  which we take to be
\begin{equation}\label{eq:13}
\xi_t=v(t,X_t), \end{equation}
where $v(t,\cdot)$ is a continuous function on $\T^d$.
As explained in \cite{MR3134900}, page 13, if the feedback function $v$ is smooth enough, then the probability distribution $m_{t}$ of $X_t$ has a density with respect to the Lebesgue measure,  $m_{v}(t,\cdot)\in \PP\cap L^1(\T^d)$ for all $t$,
and   $m_v$ is solution of the Fokker-Planck equation
 \begin{equation}
\label{eq:14}
\frac{\partial m_v} {\partial t} (t,x)  - \nu \Delta m_v(t,x) +
\diver \Big( m_v(t,\cdot)  v(t,\cdot)  \Big) (x)=0,\;\; 
 \end{equation}
for $t\in (0,T]$ and $ x\in \T^d$,
with the initial condition
\begin{equation}
  \label{eq:15}
m_v(0,x)= m_0(x),\quad x\in \T^d.
\end{equation}
 We define the objective function
\begin{equation}
  \label{eq:16}
  \begin{split}
    \cJ(v) &= \EE\left[ \int_0^T L( X_t,m_v(t, X_t), \xi_t) dt  + u_T(X_T)\right]\\
&=\ds \int_{[0,T]\times\T^d} L( x, m_v(t,x),v(t,x)) m_v(t,x) dx dt + \int_{\T^d} u_T(x) m_v(T,x) dx.    
  \end{split}
\end{equation}
The goal is to minimize $\cJ(v)$ subject to (\ref{eq:14}) and (\ref{eq:15}). Following A. Bensoussan, J. Frehse and P. Yam  in  \cite{MR3134900},   it can be seen   that if there exists a smooth feedback function $v^* $ achieving
$\cJ( v^*)= \min \cJ(v)$  and such that $ m_{v^*}>0$    then
\begin{displaymath}
  v^*(t,x)={\rm{argmin}}_{v}  \Bigl( L(x, m_{v^*}(t,x),v)+ \nabla u(t,x)\cdot v(t,x)\Bigr)
\end{displaymath}
and $(m_{v^*},u)$ solve~(\ref{eq:1}), (\ref{eq:2}) and (\ref{eq:3}). 
The issue with the latter argument
is that we do not know how to guarantee a priori that 
 $m$ will not vanish in some region of $(0,T)\times \T^d$. 
Hereafter, we propose a theory of weak solutions of (\ref{eq:1})-(\ref{eq:3}), in order to cope with the cases when $m$ may vanish. 
\begin{remark}
  \label{sec:heur-just-refeq:1}
Note that the system of partial differential equations that arises in a mean field game is
\begin{displaymath}
  \ds
\frac{\partial u} {\partial t} (t,x) + \nu \Delta u(t,x) +  H
(x, m(t,x), D u(t,x)) 
  = 0 ,
\end{displaymath}
with (\ref{eq:2}) and (\ref{eq:3}). To the best of our knowledge, for  such a system with the Hamiltonian given in (\ref{eq:4}),
 the existence of a solution is an open problem except in the stationary case with $\beta=2$ and $\alpha=1$, see \cite{gomes2014existence}; 
in the latter case, a very special trick can be used. Besides, the theory of weak solutions proposed below does not apply to MFG, because as explained above,
 MFG with congestion cannot be seen as an optimal control problem driven by a partial differential equation. 
\end{remark}

\section{Two optimization problems}
\label{sec:two-optim-probl}
The first optimization is described as follows: consider the set $\cK_0$:
\begin{displaymath}
  \cK_0=\left\{ \phi \in \cC^2 ([0,T]\times \T^d): \phi(T,\cdot)= u_T \right\}
\end{displaymath}
and the functional $\cA$ on $\cK_0$:
\begin{equation}
  \label{eq:17}
\cA(\phi)=\inf_{
  \begin{array}[c]{l}
m\in L^1((0,T)\times \T^d)\\ m\ge 0    
  \end{array}
}  \cA(\phi,m)
\end{equation}
where 
\begin{equation}
  \label{eq:18}
 \cA(\phi,m)=
  \begin{array}[t]{l}
\ds \int_0^T \int_{\T^d}    m(t,x)\left(\frac{\partial \phi} {\partial t} (t,x)  + \nu \Delta \phi(t,x)  +  H
(x, m(t,x), D \phi(t,x) )          \right)    dx dt    \\ \ds +\int_{\T^d}    m_0(x)\phi(0,x) dx
  \end{array}\end{equation}
with the convention that if $m=0$ then $mH(x,m,p)=0$.
Then the first problem consists of maximizing 
\begin{equation}
  \label{eq:19}
  \sup_{\phi\in \cK_0} \cA(\phi).
\end{equation}
For the second optimization problem, we consider the set  $\cK_1$:
\begin{equation}
  \label{eq:20}
  \cK_1=\left\{
    \begin{array}[c]{l}
(m,z) \in L^1 ((0,T)\times \T^d) \times L^1 ((0,T)\times \T^d;\R^d)  : \\ m\ge 0 \hbox{ a.e}
\\
	\ds \frac{\partial m} {\partial t}   - \nu \Delta  m +  \diver   z  =0 ,\\
 m(0,\cdot)= m_0 
    \end{array}
\right\}
\end{equation}
where the boundary value problem is satisfied in the sense of distributions.
We also define
\begin{equation}
\label{eq:21}
\widetilde L (x,m, z)= \left\{
  \begin{array}[c]{rl}
m L(x, m, \frac z m ) \quad &\hbox{ if } m>0\\
0    \quad &\hbox{ if } (m,z)=(0,0)\\
+\infty \quad &\hbox{ otherwise } 
  \end{array}
    \right.   .
\end{equation}
Note that $(m,z) \mapsto \widetilde L(x,m,z)$ is LSC on $\R\times \R^d$.
Using (\ref{eq:6}), (H2) and the results of \cite{YAML} paragraph 3.2, it can be proved that  
 $(m,z) \mapsto \widetilde L(x,m,z)$ is convex on $\R\times \R^d$, because $0<\alpha<1$. It can also be checked that
\begin{equation}\label{eq:22}
  \widetilde L(x,m,z)=\left\{
    \begin{array}[c]{ll}
\ds \sup_{p\in \R^d} (-z\cdot p + mH(x,m,p))\quad \quad &\hbox{if } m>0 \hbox{ or } (m,z)=(0,0)      \\
+\infty &\hbox{otherwise.}
    \end{array}\right.
\end{equation}
Since $L$ is bounded from below, $\ds \int_0^T \int_{\T^d} \widetilde L (x,m(t,x), z(t,x)) dx dt $ is well defined in $\R \cup \{+\infty\}$ for all $(m, z) \in \cK_1$.
 We are interested in minimizing 
 \begin{equation}
   \label{eq:23}
  \inf_{(m,z)\in \cK_1} \cB(m,z),
 \end{equation}
where if $\ds \int_0^T \int_{\T^d} \widetilde L (x,m(t,x), z(t,x)) dx dt<+\infty$,
\begin{equation}
  \label{eq:24}
\ds \cB(m, z)=  \int_0^T \int_{\T^d} \widetilde L (x,m(t,x), z(t,x)) dx dt  + \int_{\T^d}     m(T,x) u_T(x) dx,
\end{equation}
 and if not,
\begin{equation}
  \label{eq:25}
\cB(m, z)= +\infty.
\end{equation}
\\
 To give a meaning to the second integral in (\ref{eq:24}), we define $w(t,x)= \frac {z(t,x)}{m(t,x)}$ if $m(t,x)>0$ and 
$w(t,x)=0$ otherwise. From (\ref{eq:6}) and (\ref{eq:8}), we see that   $\int_0^T \int_{\T^d} \widetilde L (x,m(t,x), z(t,x)) dx dt <+\infty$ implies that
 $m ^{1+\frac \alpha {\beta-1}} |w|^{\beta ^*}\in L^1( (0,T)\times \T^d)$,   which implies that 
 $m  |w|^{\frac \beta  {\beta -1 +\alpha}}\in L^1( (0,T)\times \T^d)$. 
In that case, the boundary value problem in (\ref{eq:20}) can be rewritten as follows:
\begin{equation}
  \label{eq:26}
	\ds \frac{\partial m} {\partial t}   - \nu \Delta  m +  \diver   (mw)  =0 ,\quad\quad m(0,\cdot)= m_0,
\end{equation}
and we can use the following Lemma which can be found in \cite{cardaliaguet2014second}:
\begin{lemma}
\label{sec:two-optim-probl-1}
If $(m,w)\in \cK_1$ is such that $\int_0^T \int_{\T^d} \widetilde L (x,m(t,x), z(t,x)) dx dt <+\infty$, then
  the map $t\mapsto m(t)$  for $t\in (0,T)$ and $t\mapsto m_0$ for $t<0$  is H{\"o}lder continuous a.e. for the weak * topology of $\cP(\T^d)$.
\end{lemma}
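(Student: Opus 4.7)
\smallskip

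\textbf{Plan of proof for Lemma \ref{sec:two-optim-probl-1}.}
The plan is to test the continuity equation (\ref{eq:26}) against an arbitrary $\phi\in\cC^2(\T^d)$ and to obtain an $|t-s|^\theta$-bound on $\int_{\T^d}\phi(x)(m(t,x)-m(s,x))\,dx$ with a constant depending only on $\|\phi\|_{\cC^2}$, the mass of $m_0$, and the finite quantity $\int_0^T\!\!\int_{\T^d}\widetilde L(x,m,z)\,dx\,dt$. Since $\cC^2(\T^d)$ is dense in $\cC(\T^d)$, this is equivalent to Hölder continuity of $t\mapsto m(t)$ for the weak-* topology of $\cP(\T^d)$.

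The core computation is as follows. Using (\ref{eq:26}) in the sense of distributions, for almost every $0\le s<t\le T$,
\begin{equation*}
\int_{\T^d}\phi(x)\bigl(m(t,x)-m(s,x)\bigr)dx=\int_s^t\!\!\int_{\T^d}\bigl(\nu\,\Delta\phi(x)\,m(\tau,x)+\nabla\phi(x)\cdot z(\tau,x)\bigr)dx\,d\tau.
\end{equation*}
Taking $\phi\equiv1$ first gives conservation of mass $\int_{\T^d}m(t,x)\,dx=1$ for a.e. $t$, so the first integrand is controlled by $\nu\|\Delta\phi\|_\infty$, contributing at most $C\,\|\phi\|_{\cC^2}|t-s|$. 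For the second integrand I will establish that $z\in L^r((0,T)\times\T^d;\R^d)$ for a suitable $r>1$. From (\ref{eq:21}), (\ref{eq:6}) and (H2), the finiteness of $\int\widetilde L$ yields simultaneously $m^{1+\frac{\alpha}{\beta-1}}|w|^{\beta^*}\in L^1$ and $m\in L^q$, where $w=z/m$. Writing $|z|=m|w|=m^{(1-\alpha)/\beta}\bigl(m^{1+\frac{\alpha}{\beta-1}}|w|^{\beta^*}\bigr)^{1/\beta^*}$ and applying Hölder in space-time with exponents $\beta,\beta^*$,
\begin{equation*}
\int_s^t\!\!\int_{\T^d}|z|\,dx\,d\tau\le\Bigl(\int_s^t\!\!\int_{\T^d}m^{1-\alpha}dx\,d\tau\Bigr)^{1/\beta}\Bigl(\int_0^T\!\!\int_{\T^d}m^{1+\frac{\alpha}{\beta-1}}|w|^{\beta^*}dx\,d\tau\Bigr)^{1/\beta^*}.
\end{equation*}
Since $1-\alpha<q$, a further Hölder inequality on $m^{1-\alpha}$ gives $\int_s^t\!\!\int_{\T^d}m^{1-\alpha}\le C|t-s|^{1-(1-\alpha)/q}$. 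Altogether,
\begin{equation*}
\Bigl|\int_{\T^d}\phi\bigl(m(t)-m(s)\bigr)dx\Bigr|\le C\,\|\phi\|_{\cC^2}\bigl(|t-s|+|t-s|^\theta\bigr),\qquad\theta=\frac{q-1+\alpha}{q\beta}\in(0,1],
\end{equation*}
which after choosing a countable dense family $(\phi_n)\subset\cC^2(\T^d)$ and redefining $t\mapsto m(t)$ on a Lebesgue-negligible set of $(0,T)$ produces the desired Hölder representative.

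To include the transition at $t=0$ (i.e. to handle the extension by $m_0$ for $t<0$), I will argue that the distributional initial condition $m(0,\cdot)=m_0$ means the same integral identity holds with $s=0$ and $m(0)=m_0$: one tests (\ref{eq:26}) against $\eta(\tau)\phi(x)$ for a smooth cut-off $\eta$ with $\eta(0)=1$, $\eta(T)=0$, and passes to the limit on $\eta$ approximating $\mathbf{1}_{[0,t]}$. The same estimates then yield the Hölder bound up to $t=0$, and the extension by the constant $m_0$ for $t<0$ is trivially Hölder.

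The only real obstacle is the integrability bookkeeping leading to the Hölder exponent $\theta$; this requires the interplay of (H2) (giving $m\in L^q$) with the bound on $m|w|^{\beta^*}$ coming from $\widetilde L$, together with $\alpha<1$ to make $m^{1-\alpha}$ genuinely sub-linear. Assumption (H3) ($\beta\ge q^*$) is not needed for this lemma but will be used elsewhere in the duality. The remaining steps (density, mass conservation, trace identification at $t=0$) are standard consequences of the distributional formulation.
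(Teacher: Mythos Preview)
Your argument is correct. The paper itself does not prove this lemma but cites \cite{cardaliaguet2014second}, and your strategy---test (\ref{eq:26}) against a fixed spatial function, bound the diffusion and drift contributions separately via H{\"o}lder---is exactly the standard one.

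There is, however, a mild difference worth noting. You test against $\phi\in\cC^2(\T^d)$, so the diffusion term $\nu\int\Delta\phi\,m$ is trivially $O(|t-s|)$ and all the work goes into the drift, for which your factorisation $|z|=m^{(1-\alpha)/\beta}\bigl(m^{1+\alpha/(\beta-1)}|w|^{\beta^*}\bigr)^{1/\beta^*}$ yields the exponent $\theta=(q-1+\alpha)/(q\beta)$ in the metric $d(\mu,\nu)=\sup_{\|\phi\|_{\cC^2}\le 1}|\langle\phi,\mu-\nu\rangle|$. The argument referenced by the paper instead tests against Lipschitz $\phi$ (the Kantorovich--Rubinstein metric), which forces a nontrivial treatment of the diffusion---hence the $1/2$ in Remark~\ref{sec:two-optim-probl-3}---and handles the drift through the integrability $m|w|^{\beta/(\beta-1+\alpha)}\in L^1$ mentioned just before the lemma, giving the exponent $(1-\alpha)/\beta$. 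Both metrics metrize weak-$*$ on $\cP(\T^d)$, so both establish the lemma; the paper's version is what is actually used later in the proof of Lemma~\ref{sec:relaxed-problem-3}, where the estimate $|m_\epsilon(0,x)-(h_{2,\epsilon}\star m_0)(x)|\le C\epsilon^{\delta\zeta-d-1}$ relies on controlling $\|h_{2,\epsilon}\|$ in the Lipschitz (not $\cC^2$) norm. As a side remark, your bound $\int_s^t\!\int m^{1-\alpha}\le C|t-s|^{1-(1-\alpha)/q}$ via $m\in L^q$ is slightly wasteful: since $\int_{\T^d}m(\tau,\cdot)=1$, Jensen gives $\int_{\T^d}m^{1-\alpha}\le 1$ directly, improving your drift exponent to $1/\beta$.
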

\begin{remark}
  \label{sec:two-optim-probl-3}
Following the proof of lemma 3.1 in \cite{cardaliaguet2014second}, we see that
the H{\"o}lder exponent in Lemma \ref{sec:two-optim-probl-1} is greater than or equal to $\min \left( \frac 1 2 , \frac {1-\alpha} \beta\right)$.
\end{remark}
This lemma implies that the measure $m(t)$ is defined for all $t$, so the second integral in (\ref{eq:24}) has a meaning.
\begin{lemma}
  \label{sec:two-optim-probl-2}
  \begin{equation}
    \label{eq:27}
\sup _{\phi\in \cK_0} \cA(\phi)= \min_{(m,z)\in \cK_1} \cB(m,z).
  \end{equation}
Moreover the latter minimum is achieved by a unique $(m^*, z^*)\in \cK_1$, and $m^*\in L^q((0,T)\times \T^d)$.
\end{lemma}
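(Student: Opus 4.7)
The plan is to follow the Fenchel--Rockafellar strategy used by Cardaliaguet et al.\ in \cite{MR3116016,cardaliaguet2014second}. First I would establish the weak duality inequality $\sup_{\phi\in\cK_0}\cA(\phi)\le\inf_{(m,z)\in\cK_1}\cB(m,z)$ by a direct computation. Given $\phi\in\cK_0$ and $(m,z)\in\cK_1$ with $\cB(m,z)<\infty$, the distributional identity (\ref{eq:26}) together with two integrations by parts (using the terminal condition $\phi(T,\cdot)=u_T$ and Lemma \ref{sec:two-optim-probl-1} to make sense of $m(T)$) yields
\[
\cA(\phi,m)=\int_{\T^d} m(T,x) u_T(x)\,dx+\int_0^T\!\!\int_{\T^d}\bigl(m H(x,m,D\phi)-z\cdot D\phi\bigr)\,dx\,dt.
\]
The pointwise Fenchel inequality (\ref{eq:22}) then gives $m H(x,m,D\phi)-z\cdot D\phi\le\widetilde L(x,m,z)$, so $\cA(\phi)\le\cA(\phi,m)\le\cB(m,z)$.

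The reverse inequality will come from Fenchel--Rockafellar duality. I would introduce the linear operator $\Lambda\phi=(\partial_t\phi+\nu\Delta\phi,D\phi)$ on a suitable Banach space $E$ containing $\cK_0$, and rewrite $-\cA(\phi)=\cF(\phi)+\cG(\Lambda\phi)$, where $\cF(\phi)=-\int_{\T^d}m_0(x)\phi(0,x)\,dx$ with the convex constraint $\phi(T,\cdot)=u_T$ built into its domain, and
\[
\cG(a,b)=\int_0^T\!\!\int_{\T^d}K(x,a,b)\,dx\,dt,\qquad K(x,a,b)=\sup_{m\ge 0}\bigl(-m(a+H(x,m,b))\bigr).
\]
A short Legendre-transform computation using (\ref{eq:22}) identifies the Fenchel conjugate of $K(x,\cdot,\cdot)$ at $(m,z)$ with $\widetilde L(x,m,z)+\mathbf{1}_{m\ge 0}$, so the Fenchel dual of $\inf(\cF+\cG\circ\Lambda)$ is exactly $\min\cB$ over $\cK_1$. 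The main obstacle is verifying the qualification hypothesis: one must exhibit a $\phi\in\cK_0$ at which $\cG$ is finite and continuous on the range of $\Lambda$, which requires taming the singularity of $H(x,m,p)$ as $m\to 0^+$ and the superlinear growth of $K$ in $a$. I would use a smooth solution of a regularized Hamilton--Jacobi equation as such a test function, relying on the qualitative theory developed in \cite{YAML}.

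For the existence of a minimizer, I would use the direct method. A minimizing sequence $(m_n,z_n)$ satisfies $\cB(m_n,z_n)\le C$; the lower bound (\ref{eq:8}) on $\ell$ then produces a uniform bound on $\|m_n\|_{L^q((0,T)\times\T^d)}$, while the kinetic part of $\widetilde L$ gives a uniform $L^1$ bound on $m_n^{(\alpha-1)/(\beta-1)}|z_n|^{\beta^*}$. Combining these through Hölder's inequality and using assumption [H3] ($\beta\ge q^*$) yields an $L^r$ bound on $z_n$ for some $r>1$. Extracting weakly convergent subsequences $m_n\rightharpoonup m^*$ and $z_n\rightharpoonup z^*$, one passes to the limit in the linear constraint (\ref{eq:26}), so $(m^*,z^*)\in\cK_1$. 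Joint convexity of $(m,z)\mapsto\widetilde L(x,m,z)$ together with Fatou's lemma gives the lower semicontinuity of the bulk integral, while Lemma \ref{sec:two-optim-probl-1} ensures the weak continuity of $m\mapsto\int_{\T^d}m(T,\cdot)u_T$; hence $\cB(m^*,z^*)\le\liminf\cB(m_n,z_n)$. The $L^q$ membership of $m^*$ is a byproduct of the a priori bound.

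Uniqueness is a standard strict-convexity argument: if $(m_1,z_1)$ and $(m_2,z_2)$ both achieve the minimum, their midpoint lies in $\cK_1$ by linearity of the constraint and achieves the minimum as well. The strict convexity of $m\mapsto m\ell(x,m)$ postulated in [H2] forces $m_1=m_2$ a.e., and the strict convexity of $z\mapsto\widetilde L(x,m,z)$ on $\{m>0\}$, together with the effective-domain constraint $(m,z)=(0,0)$ whenever $m=0$, forces $z_1=z_2$ a.e., completing the proof.
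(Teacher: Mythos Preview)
Your overall strategy---Fenchel--Rockafellar duality for the equality, strict convexity for uniqueness---is exactly the paper's. But two steps in your execution are underdeveloped, and one of them is where the paper does most of its work.

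\textbf{Qualification.} You correctly flag this as the obstacle, but your proposed fix is misdirected. The difficulty is not finding a clever $\phi_0$: the paper simply takes $\phi_0(t,\cdot)\equiv u_T$, for which $\cF(\phi_0)$ and $\cG(\Lambda\phi_0)$ are trivially finite. The real work is showing that $\cG$ is \emph{continuous} at $\Lambda\phi_0$, and a regularized HJB solution does not help with that. The paper instead proves that $\cG$ is continuous on all of $E_1=\cC^0\times\cC^0$ by showing that the pointwise minimizer $\mu=\psi(x,\gamma,p)$ in $K(x,\gamma,p)=\min_{\mu\ge 0}\bigl(\mu\gamma+\mu H(x,\mu,p)\bigr)$ depends continuously on $(x,\gamma,p)$. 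This is delicate precisely at $p=0$ because of the singular term $m^{-\alpha}|p|^\beta$, and the paper spends a full page on the case analysis (splitting on the sign of $\gamma+\ell(x,0)$). You would have to carry out an equivalent argument regardless of which $\phi_0$ you choose, since continuity concerns a whole neighbourhood of $\Lambda\phi_0$.

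\textbf{Identifying the dual.} Your ``short Legendre-transform computation'' gives the pointwise conjugate $K^*(x,\cdot,\cdot)=\widetilde L(x,\cdot,\cdot)$, but Fenchel--Rockafellar delivers a minimizer in $E_1^*$, the space of Radon measures. To conclude that this minimizer lies in $L^1\times L^1$ and hence in $\cK_1$, the paper invokes Rockafellar's theorem on conjugates of integral functionals (\cite{MR0310612}) and computes the recession function $\widetilde L_\infty(x,m,z)$, which is $+\infty$ unless $(m,z)=(0,0)$; this forces the singular part of any optimal measure to vanish. Without this step you only obtain $\sup\cA=\min_{E_1^*}(\cdots)\le\inf_{\cK_1}\cB$, which together with your weak-duality inequality yields nothing new. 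Your separate direct-method argument for existence is correct and does produce a minimizer in $\cK_1$, but it does not by itself close the duality gap; you still need the recession computation (or an equivalent argument) to identify the measure-valued dual with the problem on $\cK_1$.

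Your weak-duality computation and your uniqueness argument are correct and match the paper's.
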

\begin{proof}
  Let us reformulate the optimization problem (\ref{eq:19}): take $E_0=\cC^2([0,T]\times \T^d)$ and $E_1=\cC^0([0,T]\times \T^d)\times \cC^0([0,T]\times \T^d;\R^d)$.  We define the functional $\cF$ on $E_0$:
  \begin{equation*}
    \cF(\phi)= \chi_T(\phi)- \int_{\T^d} m_0(x)\phi(0,x)dx
  \end{equation*}
where $\chi_T(\phi)=0$ if $\phi|_{t=T}=u_T$ and $\chi_T(\phi)=+\infty$ otherwise. Let us also define the linear operator $\Lambda: E_0\to E_1$ by
\begin{displaymath}
  \Lambda(\phi)= \left(\frac{\partial \phi} {\partial t}   + \nu \Delta \phi, D\phi \right).
\end{displaymath}
For $(a,b)\in E_1$,  let $\cG(a, b)$ be defined by
\begin{equation}
  \label{eq:28}
\cG(a, b)= -\inf_{ \begin{array}[c]{l}
m\in L^1((0,T)\times \T^d)\\ m\ge 0    
  \end{array}} \int_0^T \int_{\T^d}    m(t,x) \left( a(t,x)+  H
(x, m(t,x), b(t,x) ) \right)    dx dt   .
\end{equation}
Note that the infimum with respect to $m$ is in fact a minimum:  indeed,  
 from (\ref{eq:4}) and (\ref{eq:8}), we see that 
$m\mapsto \ds \int_0^T \int_{\T^d}    m(t,x) \left( a(t,x)+  H(x, m(t,x), b(t,x) ) \right)    dx dt $ is convex, coercive and continuous in the set $\left\{ m\in L^q((0,T)\times \T^d):\; m\ge 0 \right \}$.  Hence, this map  is lower semi-continuous for the weak convergence in  $\{ m\in L^q((0,T)\times \T^d), m\ge 0 \}$.
On the other hand, since a  minimizing sequence  $(m_n)_{n\in \N}$ for (\ref{eq:28}) is bounded in $L^q((0,T)\times \T^d)$,  we can extract a subsequence which converges weakly in  $L^q((0,T)\times \T^d)$ to a nonnegative function.  The weak limit achieves the   minimum in (\ref{eq:28}). We now aim at characterizing the optimal $m$.
\\
Let us first characterize
\begin{equation}
  \label{eq:29}
K(x,\gamma, p)= \min_{\mu\ge 0} \left( \mu \gamma +\mu  H(x, \mu, p) \right),
\end{equation}
 which is nonpositive and concave with respect to $(\gamma, p)$;
 since $\mu\mapsto \mu H(x,\mu,p)$ is $\cC^1$, strictly convex on $\R_+$ 
and tends to $+\infty$ as $\mu\to +\infty$,  we see
that for any $x\in \T^d$,
 if $p\not =0$ and  $\gamma\in \R$, or if $p=0$ and $\gamma+\ell(x, 0) \le 0$,  
then there exists a unique $\mu=\psi(x, \gamma, p)\ge 0$ such that 
\begin{displaymath}
\gamma+ H(x,\mu, p)+ \mu H_m(x,\mu,p)=0.
\end{displaymath}
Note that if $p=0$ and $\gamma+\ell(x, 0) < 0$,
  then $\mu=\psi(x, \gamma, 0)>0$ is characterized by  $\gamma+\ell(x, \mu)+\mu \ell_m(x,\mu) = 0$.  
We extend $\psi$ by $0$ in the set $\{(\gamma,0):  \gamma+\ell(x, 0) \ge 0\}$. Therefore, 
\begin{equation}
  \label{eq:30}
  K(x,\gamma,p)=      \psi(x, \gamma, p) \gamma+  \psi(x, \gamma, p) H(x, \psi(x, \gamma, p), p),
\end{equation}
with the convention that $mH(x,m,p)=0$ if $m=0$.
\\
We claim that the map $ (\gamma, p)\mapsto \psi(x, \gamma, p)$ is continuous in 
$ \R\times \R^d$. Indeed,
\begin{enumerate}
\item the continuity of $(\gamma,m, p)\mapsto \gamma+ H(x,m, p)+ \mu H_m(x,m,p) $ and the fact that this map is strictly increasing w.r.t. $m$ in $(0,+\infty)$
implies that $(\gamma, p)\mapsto \psi(x, \gamma, p)$ is continuous in 
$ \R\times \R^d\backslash\{0\}$.
\item Similarly, the continuity of $\gamma\mapsto \psi(x, \gamma, 0)$ stems from the  continuity of  the map
 $(\gamma,m)\mapsto  \gamma+\ell(x, m)+m \ell_m(x,m) $  and its strictly increasing character w.r.t. $m$ in $(0,+\infty)$.
\item 
Let us prove  that if    $(\tilde \gamma ,\tilde p)$ tends to $(\gamma,0)$ with  $ \tilde p\not = 0$, then 
\begin{equation}
  \label{eq:66}
\lim_{( \tilde \gamma,\tilde p) \to ( \gamma,0) } \psi(x,  \tilde \gamma,\tilde p)= \psi(x,  \gamma,0).
\end{equation}
\begin{enumerate}
\item
If $ \gamma+\ell(x, 0) < 0$, then $\psi(x, \gamma, 0)>0$ and we get (\ref{eq:66}) from the same argument as in point 1.
\item Consider the case $ \gamma+\ell(x, 0) > 0$.  Suppose that
$(\tilde \gamma ,\tilde p)$ tends to $(\gamma,0)$ with  $ \tilde p\not = 0$,  
 and set $\tilde \mu =\psi(x,  \tilde \gamma, \tilde p)>0$.
We see that
\begin{displaymath}
  \begin{split}
    (1-\alpha) \tilde \mu^{-\alpha}   |\tilde p|^{\beta} = 
\tilde \gamma +   \frac d {dm}  (m\mapsto m\ell(x,m))(\tilde \mu)
    >  \tilde \gamma + \ell(x,0)    \to \gamma +\ell(x,0)  .
  \end{split}
\end{displaymath}
This implies that $\tilde \mu^\alpha  |\tilde p|^{-\beta}$ is bounded as  $(\tilde \gamma, \tilde p) \to  (\gamma,0)$, hence (\ref{eq:66}). 
\item Finally, we consider the case when $ \gamma=-\ell(x, 0) \le 0$ and $(\tilde \gamma, \tilde p) \to ( \gamma,0)$ with $\tilde p\not =0$;
 let us assume that for a subsequence, $\tilde \mu$ is bounded away from $0$: passing to the limit in the identity
\begin{displaymath}
  \left(   \frac d {dm}  (m\mapsto m\ell(x,m))(\tilde \mu)-\ell (x,0) \right) -(1-\alpha) \tilde \mu^{-\alpha} |\tilde p|^{\beta}=\gamma-\tilde \gamma,
\end{displaymath}
we obtain that 
\begin{displaymath}
  \lim_{(\tilde \gamma, \tilde p) \to ( \gamma,0)}   \frac d {dm}  (m\mapsto m\ell(x,m))(\tilde \mu)=\ell (x,0), 
\end{displaymath}
which can happen only if $\tilde \mu\to 0$ and we reach a contradiction.
 Hence, (\ref{eq:66}) holds.
\end{enumerate}
\end{enumerate}
We have proved the continuity of $\psi$ with respect to $(\gamma, p)$. The continuity of $\psi$ with respect to $x$ follows from similar arguments, 
using the  regularity assumptions on $\ell $.  Therefore,  $\psi$ is  continuous in the set $ \T^d\times \R\times \R^d$.

It is also useful to notice that 
\begin{displaymath}
  \begin{split}
   K(x,\gamma,p)&= \inf_{\mu \ge 0}\left(\mu(\gamma+ H(x,\mu,p) \right)    \\
&= \inf_{\mu \ge 0}\left(\mu\gamma+\mu \inf_{\xi}\left( \xi\cdot p + L(x,\mu,\xi) \right)\right)\\
 &=\inf_{(\mu,z) \in \R\times \R^d } \left(\mu\gamma+  z\cdot p + \widetilde 
L(x,\mu,z) \right)
  \end{split}
\end{displaymath}
and, from the Fenchel-Moreau theorem, see e.g.  see \cite{MR1451876}, 
 that
\begin{equation}
  \label{eq:31}
  \widetilde 
L(x,\mu,z)=\sup_{(\gamma,p)\in \R\times \R^d }\left(- \mu\gamma-  z\cdot p+K(x,\gamma,p)  \right).
\end{equation}
\begin{remark}
  \label{sec:two-optim-probl-4}
Note that for all $0\not = p\in \R^d$ and $\gamma\in \R$, 
 the map $\mu\mapsto  \mu \left(\gamma+ H(x,\mu,p) \right)$ is strictly 
decreasing in some interval $[0, \bar \mu]$ where $\bar \mu >0$ depends on $p$ and $\gamma$,
 and that its derivative tends to $-\infty$ as $\mu\to 0^+$. Hence, 
if $\mu^* =0$ achieves the minimum of  $\mu\mapsto  \mu \left (\gamma+ H(x,\mu,p) \right)$, then $p$ must be $0$. 
Similarly, $\gamma$ must be such that $\gamma+\ell (x,0)\ge 0$.
\end{remark}

With (\ref{eq:30}), the optimality conditions for (\ref{eq:28})   yield that
\begin{equation}
  \label{eq:32}
  \cG(a,b)= - \int_0^T \int_{\T^d}  K(x,a(t,x),b(t,x))   dx dt.
\end{equation}
From (\ref{eq:30}) and the continuity of $\psi$, we see that $\cG$ is continuous on $E_1$. 
We observe that 
\begin{equation}
  \label{eq:33}
  \sup_{\phi\in \cK_0} \cA(\phi)=   - \inf_{\phi\in E_0} \left( \cF(\phi) + \cG (\Lambda(\phi)) \right).
\end{equation}
By choosing $\phi_0(t,\cdot)= u_T$, we see that $\cF(\phi_0)<+\infty$, $\cG(\Lambda (\phi_0))<+\infty$ and that 
$\cG\circ \Lambda$ is continuous at $\phi_0$.
We can thus apply Fenchel-Rockafellar duality theorem, see \cite{MR1451876}: 
\begin{equation}
  \label{eq:34}
-  \inf_{\phi\in E_0} \left( \cF(\phi) + \cG (\Lambda(\phi)) \right)=\min_{(m,z)\in E_1^* } \left(      \cF^* (\Lambda^* (m,z)) + \cG^* (-m,-z) \right)
\end{equation}
where $E_1^*$ is the topological dual of $E_1$ i.e. the set of Radon measures $(m,z)$ on $(0,T)\times \T^d$ with values in $\R\times \R^d$. If $E_0^*$ is the dual space of $E_0$, the operator $\Lambda^*: E_1^* \to E_0^*$ is the adjoint of $\Lambda$. The maps $\cF^*$ and $\cG^*$ are the Legendre-Fenchel conjugates of $\cF$ and $\cG$. Following \cite{cardaliaguet2014second}, we check that 
\begin{displaymath}
    \cF^* (\Lambda^* (m,z))=\left\{
      \begin{array}[c]{ll}
       \ds  \int_{\T^d }  u_T(x) dm(T,x) \quad & \hbox{if }
       \left\{ \begin{array}[c]{l}
\ds \frac{\partial m} {\partial t}   - \nu \Delta  m +  \diver   z  =0 ,\\
m(0,\cdot)= m_0           
        \end{array}\right.
\\
+\infty \quad & \hbox{otherwise. }
      \end{array}
\right.
\end{displaymath}
where the boundary value problem is understood in the sense of distributions.
\\
On the other hand, from Rockafellar, \cite{MR0310612} Theorem 5, and
(\ref{eq:31}), see also \cite{MR3116016}, we see that 
\begin{displaymath}
  \cG^* (-m,-z)=\int_0^T \int_{\T^d} \widetilde L (x,m^{\rm ac} (t,x), z^{\rm ac}(t,x)) dxdt + \int_0^T \int_{\T^d} \widetilde L_{\infty}\left(x, \frac {dm^{\rm sing}}{d\theta},\frac {dz^{\rm sing}}{d\theta} \right)  d\theta,
\end{displaymath}
where $(m^{\rm ac}, z^{\rm ac})$ and $(m^{\rm sing},z^{\rm sing})$ respectively denote the absolutely continuous and singular parts of $(m,z)$, $\theta$ is any measure with respect to which $(m^{\rm sing},z^{\rm sing})$ is absolutely continuous, (for instance $m^{\rm sing}+|z^{\rm sing}|$, and $\widetilde L_\infty(x,\cdot)$ is the recession function of $\widetilde L(x,\cdot)$, i.e.
\begin{displaymath}
  \widetilde L_\infty(x,m,z)=\sup_{\lambda>0} \frac 1 \lambda \widetilde L (x,\lambda m, \lambda z)=\left\{
    \begin{array}[c]{ll}
      0\quad&\hbox{if } (m,z)=(0,0),\\
+\infty \quad &\hbox{otherwise}.
    \end{array}
\right.
\end{displaymath}
Therefore, 
\begin{equation}
  \label{eq:35}
  \cG^* (-m,-z)=\left\{ 
    \begin{array}[c]{ll}
\ds \int_0^T \int_{\T^d} \widetilde L (x,m(t,x), z(t,x)) dxdt      \quad&  \hbox{if } (m,z) \in L^1 ((0,T)\times \T^d) \times L^1 ((0,T)\times \T^d;\R^d)\\
+\infty  \quad&  \hbox{otherwise.}
    \end{array}
 \right.
\end{equation}
Hence,
\begin{displaymath}
  \min_{(m,z)\in E_1^* } \left(      \cF^* (\Lambda^* (m,z)) + \cG^* (-m,-z) \right)= \min_{(m,z)\in \cK_1} \cB(m,z).
\end{displaymath}
and we obtain the desired result from (\ref{eq:33}) and (\ref{eq:34}).\\
Using (\ref{eq:6}), the strict convexity of $m\mapsto m\ell (x,m)$ assumed in (H2), the convexity of the map $(m>0,z)\mapsto m^{1+\frac \alpha {\beta- 1}} |\frac z m|^{\beta^*} $ (see \cite{YAML} paragraph 3.2),  and the convexity of $\cK_1$, we 
obtain the uniqueness of $m^*$ such that $(m^* , z^*)\in \cK_1$ achieves a 
minimum of $\cB$ for some $z^*$. Moreover, from the strict convexity of 
$(m,z)\mapsto m ^{1+\frac \alpha {\beta- 1}} |\frac z m|^{\beta^*} $ for $m>0$, we deduce that $ \frac {z^*}{m^* }$ is unique in $\{  (t,x):  m^* (t,x)>0 \}$. Since $z^*=0$ in $\{  (t,x):  m^* (t,x)=0 \}$, the uniqueness of $z^*$ follows.
It is clear from (H2) that $m^* \in L^q((0,T)\times \T^d)$. 
\end{proof}

\section{A priori estimates for a maximizing sequence of (\ref{eq:17}) }
\label{sec:priori-estim-maxim}
Let $M\in \R$ be the optimal value in (\ref{eq:17}).  Take a maximizing sequence $(\phi_n)_{n\in N}$ for (\ref{eq:17}). For some $\epsilon>0$, it can be chosen in such a way 
that
\begin{displaymath}
  M-\epsilon<\cA(\phi_n)\le M.
\end{displaymath}
From the definition of $\cA$, we see that $\cA(\phi_n)\le \cA(\phi_n, m_0)$. Hence, 
\begin{displaymath}
  \begin{array}[c]{rcl}
    M-\epsilon&\le &   \ds \int_0^T \int_{\T^d}    m_0(x)\left(\frac{\partial \phi_n} {\partial t} (t,x)  + \nu \Delta \phi_n(t,x)  +  H
(x, m_0(x), D \phi_n(t,x) )          \right)    dx dt    \\ & & \ds +\int_{\T^d}    m_0(x)\phi_n(0,x) dx\\
&=& 
 \ds \int_0^T \int_{\T^d}    \left( -\nu   Dm_0(x)\cdot D \phi_n(t,x)  +  m_0(x) H
(x, m_0(x), D \phi_n(t,x) )    \right)    dx dt +\int_{\T^d}    m_0(x)u_T(x) dx.
  \end{array}
\end{displaymath}
which implies that $\| D\phi_n \|_{L^\beta ((0,T)\times \T^d)}$ is bounded uniformly w.r.t. $n$. \\
Let $m_n$   achieve
$\cA(\phi_n)=\cA(\phi_n, m_n)$. Recall that $m_n$ is unique. 
\\
  The optimality conditions for $m_n$ in (\ref{eq:17}) are 
\begin{eqnarray}
  \label{eq:36}
\frac{\partial \phi_n} {\partial t} (t,x)  + \nu \Delta \phi_n(t,x)  +  H
(x, m_n(t,x), D \phi_n(t,x) )     + m_n(t,x) H_m((x, m_n(t,x), D \phi_n(t,x) ) \ge 0 \quad \hbox{a.e.},\\
\label{eq:37}
\ds 0=  \left \{
  \begin{array}[c]{l} \ds
\int_0^T \int_{\T^d}    m_n(t,x)\left(\frac{\partial \phi_n} {\partial t} (t,x) + \nu \Delta \phi_n(t,x)  \right)   dx dt\\
+  \ds  \int_0^T \int_{\T^d}    m_n(t,x)\left(H
(x, m_n(t,x), D \phi_n(t,x) )  + m_n(t,x) H_m(x, m_n(t,x), D \phi_n(t,x) ) \right) dx dt.
  \end{array}\right.
\end{eqnarray}
From (\ref{eq:37}), we deduce that
\begin{displaymath}
  \cA(\phi_n)=  -\int_0^T \int_{\T^d}     m_n^2(t,x) H_m((x, m_n(t,x), D \phi_n(t,x) )dx dt  +\int_{\T^d}    m_0(x)\phi_n(0,x) dx.
\end{displaymath}
From (H1) and (H2),
\begin{displaymath}
m^2H_m(x,m,p) \ge  \frac 1 {C_2} m^q -C_2 m  + m^{1-\alpha} |p|^\beta.  
\end{displaymath}
Hence,
\begin{equation}
  \label{eq:38}
  \begin{split}
&\ds   \int_0^T \int_{\T^d}  \left(  \frac 1 {C_2} m_n^q(t,x) -C_2 m_n(t,x)  + m_n^{1-\alpha}(t,x) |D \phi_n(t,x)|^\beta  \right)  
dx dt  -\int_{\T^d}    m_0(x)\phi_n(0,x) dx\\ & \le -M +\epsilon    .
  \end{split}
\end{equation}
On the other hand, (\ref{eq:36}), (H1) and (H2) imply that for some constant $C_4>0$,
\begin{equation}
  \label{eq:39}
\frac{\partial \phi_n} {\partial t} (t,x)  + \nu \Delta \phi_n(t,x)  + C_4 m_n^{q-1}(t,x) + C_4\ge 0,\quad \hbox{a.e.}
\end{equation}
Multiplying (\ref{eq:39}) by $(\phi_n^+)^{q*-1} e^{-\lambda (T-t)} $ for $\lambda $ large enough, integrating in $(\tau,T)\times \T^d$, we obtain that
\begin{equation}
  \label{eq:40}
\|\phi_n^+\|_{L^\infty(0,T;L^{q^*}(\T^d) )}
 \le C \left( 1+  \| m_n\|^{q-1}_{L^q ((0,T)\times \T^d) } + \|u_T^+\|_{L^{q^*} (\T^d)} \right),
\end{equation}
and that 
\begin{displaymath}
    \|D(  (\phi_n^+)^{q^* /2}   ) \|^2_{L^2((0,T)\times \T^d )}\le  C \left( 1+  \| m_n\|^{q}_{L^q ((0,T)\times \T^d) } + \|u_T^+\|^{q^*}_{L^{q^*} (\T^d)} \right).
\end{displaymath}
\begin{remark}
  \label{sec:priori-estim-maxim-1}
Note that the latter estimate does not hold with a degenerate diffusion as in Remark \ref{sec:assumptions}, but it will not be used hereafter.
\end{remark}

This implies that
\begin{equation}\label{eq:41}
  \int_{\T^d } m_0(x)\phi_n^+ (0,x) dx \le  C \left(  \| m_n\|^{q-1}_{L^q ((0,T)\times \T^d) } +1\right ).
\end{equation}

Combining (\ref{eq:38}) and (\ref{eq:41}), we obtain that
\begin{equation*}
  \begin{split}
&\ds   \int_0^T \int_{\T^d}  \left(  \frac 1 {C_2} m_n^q(t,x) -C_2 m_n(t,x)  + m_n^{1-\alpha}(t,x) |D \phi_n(t,x)|^\beta  \right)  
dx dt  +\int_{\T^d}    m_0(x)\phi_n^-(0,x) dx\\ & \le -M +\epsilon + C     \left(  \| m_n\|^{q-1}_{L^q ((0,T)\times \T^d) } +1\right ).
  \end{split}
\end{equation*}
The latter and (\ref{eq:41}) yield 
\begin{equation}\label{eq:42}
\ds   \int_0^T \int_{\T^d}  \left(  m_n^q(t,x)   + m_n^{1-\alpha}(t,x) |D \phi_n(t,x)|^\beta  \right)  
dx dt  +\int_{\T^d}   |\phi_n(0,x)| dx\le C.
\end{equation}

Let $\bar m(t,x)= 1_{\tau <t}$: since $\cA (\phi_n, \bar m) \ge \cA(\phi_n)$, we obtain that
\begin{displaymath}
  \begin{array}[c]{rcl}
    M-\epsilon&\le &   \ds \int_0^t \int_{\T^d}   \left(\frac{\partial \phi_n} {\partial t} (\tau,x)  + \nu \Delta \phi_n(\tau,x)  +  H
(x, 1, D \phi_n(\tau,x) )          \right)    dx d  t    \\ & & \ds +\int_{\T^d}    m_0(x)\phi_n(0,x) dx\\
&=& 
 \ds \int_0^t \int_{\T^d}      H
(x, 1, D \phi_n(\tau,x) )        dx d\tau +\int_{\T^d}   \phi_n(t,x) dx +\int_{\T^d}   (m_0(x)-1) \phi_n(0,x) dx .
  \end{array}
\end{displaymath}
This implies that $t\mapsto \int_{\T^d} \phi_n(t,x) dx $ is bounded from below uniformly w.r.t. $n$. Combining with (\ref{eq:40}), we get that $ \phi_n^-$ is bounded 
in $L^\infty(0,T; L^1(\T^d))$, and finally that $ \phi_n$ is bounded 
in $L^\infty(0,T; L^1(\T^d))$.
\\
Finally, setting $\gamma_n^1 = -\ell(\cdot, m_n)-m_n \frac {\partial \ell}{\partial m} (\cdot, m_n)$ and $0\le \gamma_n^2= 
(1-\alpha ) \frac {|D\phi_n|^\beta}{m_n^\alpha} 1_{\{m_n>0\}}$,
we see that the sequence $(\gamma_n^1)_n$ is bounded in $L^{q^*}((0,T)\times \T ^d)$, and that 
\begin{displaymath}
\frac{\partial \phi_n} {\partial t}  + \nu \Delta \phi_n  \ge \gamma_n^1+\gamma_n^2.
\end{displaymath}
Integrating the latter on $(0,T)\times \T^d$, we obtain that 
\begin{displaymath}
   \int_0^T \int_{\T^d} \gamma_n^2(t,x)dxdt \le \int_{\T^d}  u_T(x)dx -  \int_{\T^d}  \phi_n(0,x)dx - \int_0^T \int_{\T^d} \gamma_n^1(t,x)dxdt,
\end{displaymath}
which implies that the sequence of positive function $ (\gamma_n^2)_n$ is bounded in $L^1((0,T)\times \T^d)$. 
\\
To summarize, we have proven the following lemma:
\begin{lemma}
  \label{sec:priori-estim-maxim-5}
The maximizing sequence $(\phi _n)_{n\in \N}$ introduced at the beginning of \S~\ref{sec:priori-estim-maxim-5} is uniformly bounded in $L^\beta(0,T; W^{1,\beta}(\T^d))$ and in $L^\infty(0,T; L^1(\T^d))$.
\\
Noting $m_n$  the nonnegative function achieving $\cA(\phi_n,m_n)= \cA(\phi_n)$, the sequence $(m_n)_{n\in \N }$ is uniformly bounded in $L^{q}((0,T)\times \T^d)$.\\
Calling $\gamma_n(t,x)=  -H(x, m_n(t,x), D\phi_n(t,x)) - m_n(t,x)H_m(x, m_n(t,x), D\phi_n(t,x))$,
with the convention that 
$ H(x, m, p )     + m H_m(x, m,p )=\ell(x,0)$ if  $m=0$  and $p=0$,
we can split $\gamma_n$ as follows: $\gamma_n= \gamma_n^1+ \gamma_n^2$,
where $\gamma_n^1 = -\ell(\cdot, m_n)-m_n \frac {\partial \ell}{\partial m} (\cdot, m_n)$ and $ \gamma_n^2= 
(1-\alpha ) \frac {|D\phi_n|^\beta}{m_n^\alpha} 1_{\{m_n>0\}}$. The sequence $(\gamma_n^1)_{n\in \N}$ is uniformly  bounded in $L^{q*}((0,T)\times \T^d)$. The function $\gamma_n^2$ is nonnegative and the sequence $(\gamma_n^2)_{n\in \N}$ is uniformly bounded in $L^{1}((0,T)\times \T^d)$.
\end{lemma}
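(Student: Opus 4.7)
The statement packages the estimates developed in Section~\ref{sec:priori-estim-maxim}, so the plan is to reorganize the argument as a chain of bootstraps driven by two ingredients: the characterization of $\cA(\phi_n)$ as an infimum, which gives $\cA(\phi_n,m) \geq M - \varepsilon$ for every admissible test density $m \geq 0$, and the first-order optimality conditions (\ref{eq:36})--(\ref{eq:37}) for the unique minimizer $m_n$ inside $\cA(\phi_n, m_n) = \cA(\phi_n)$. I would exploit these by testing against two elementary reference densities and then turning the optimality conditions into quantitative integral bounds.

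The opening step is to take $m = m_0$ as a test density: an integration by parts of $\partial_t \phi_n$ and $\Delta \phi_n$ against $m_0$, combined with $\phi_n(T,\cdot) = u_T$, eliminates the derivative contributions up to harmless boundary terms; the $-|p|^\beta / m_0^\alpha + \ell(x,m_0)$ structure of $H$ (assumption (H1)) then yields the uniform $L^\beta$-bound on $D\phi_n$. Next I use (\ref{eq:37}) to rewrite $\cA(\phi_n)$ as $-\int m_n^2 H_m + \int m_0\,\phi_n(0,\cdot)$ and invoke the lower bound $m^2 H_m(x,m,p) \geq C_2^{-1} m^q - C_2 m + m^{1-\alpha}|p|^\beta$ coming from (H1)--(H2) to obtain (\ref{eq:38}).

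The main obstacle is controlling the remaining boundary term $\int m_0 \phi_n(0,\cdot)$. For this, (\ref{eq:36}) is transformed into the sub-solution inequality (\ref{eq:39}) and tested against $(\phi_n^+)^{q^*-1} e^{-\lambda(T-t)}$ for $\lambda$ sufficiently large. After integration by parts on $(\tau,T) \times \T^d$, the exponential factor absorbs a lower-order term, the diffusive part produces an $L^2$-estimate on $D((\phi_n^+)^{q^*/2})$, and the terminal datum $u_T$ contributes a controlled constant; the outcome is (\ref{eq:40}), and Hölder's inequality delivers (\ref{eq:41}). Plugging this into (\ref{eq:38}) and absorbing the subcritical term $\|m_n\|_{L^q}^{q-1}$ into $\|m_n\|_{L^q}^q$ by Young's inequality yields (\ref{eq:42}). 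Assumption (H3), i.e.\ $\beta \geq q^*$, together with Hölder applied to $m_n^{1-\alpha} |D\phi_n|^\beta$, then upgrades these into the claimed bounds of $D\phi_n$ in $L^\beta((0,T)\times\T^d)$ and of $m_n$ in $L^q((0,T)\times\T^d)$; combined with a Poincaré--Wirtinger argument driven by the average of $\phi_n$ (controlled below at the next step), this gives the uniform bound in $L^\beta(0,T; W^{1,\beta}(\T^d))$.

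To close, I would test $\cA(\phi_n, \bar m) \geq M - \varepsilon$ with $\bar m(t,\cdot) = \mathbf{1}_{\{\tau < t\}}$: after integration by parts, this produces a uniform lower bound on $t \mapsto \int_{\T^d} \phi_n(t,\cdot)\,dx$, and together with the $L^\infty_t L^{q^*}_x$-control on $\phi_n^+$ from (\ref{eq:40}) it forces $\phi_n^-$, hence $\phi_n$, to be bounded in $L^\infty(0,T; L^1(\T^d))$. For the splitting, the explicit form $\gamma_n^1 = -\ell(\cdot, m_n) - m_n \ell_m(\cdot, m_n)$ together with (\ref{eq:9}) and the $L^q$-bound on $m_n$ immediately give $\gamma_n^1 \in L^{q^*}$ uniformly; the nonnegativity of $\gamma_n^2 = (1-\alpha)\,|D\phi_n|^\beta m_n^{-\alpha} \mathbf{1}_{\{m_n > 0\}}$ is obvious from its expression, and integrating $\partial_t \phi_n + \nu \Delta \phi_n \geq \gamma_n^1 + \gamma_n^2$ over $(0,T)\times \T^d$ gives $\int \gamma_n^2 \leq \int u_T - \int \phi_n(0,\cdot) - \int \gamma_n^1$, whose right-hand side is already controlled by the previous estimates.
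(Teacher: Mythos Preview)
Your proposal is correct and follows essentially the same route as the paper: testing $\cA(\phi_n,\cdot)$ against $m_0$ for the $L^\beta$ gradient bound, using (\ref{eq:37}) and the lower bound on $m^2 H_m$ to reach (\ref{eq:38}), testing (\ref{eq:39}) against $(\phi_n^+)^{q^*-1}e^{-\lambda(T-t)}$ for (\ref{eq:40})--(\ref{eq:41}), combining for (\ref{eq:42}), then using $\bar m = 1_{\{\tau<t\}}$ for the lower bound on $\int_{\T^d}\phi_n(t,\cdot)$, and finally integrating $\partial_t\phi_n+\nu\Delta\phi_n\ge\gamma_n^1+\gamma_n^2$ over the whole domain to control $\gamma_n^2$ in $L^1$. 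One small remark: the $L^\beta$ bound on $D\phi_n$ already comes from the very first step (testing against $m_0$), so your later invocation of (H3) and H\"older on $m_n^{1-\alpha}|D\phi_n|^\beta$ to ``upgrade'' to the $D\phi_n$ bound is unnecessary there; (H3) is not used for the gradient estimate in the paper's argument.
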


\section{A relaxed problem}
\label{sec:relaxed-problem}

\subsection{Definition and first properties}
\label{sec:defin-first-prop}
Let $\cK$ be the set of pairs $(\phi, \gamma)$ 
such that 
\begin{itemize}
\item $\phi \in L^\beta(0,T; W^{1,\beta}(\T^d))\cap  L^\infty(0,T; \cM(\T^d))$
\item $\gamma\in \cM([0,T]\times \T^d)$ and $\gamma ^{\rm{ac}}= \gamma^1+ \gamma^2$, with $\gamma^1\in L^{q^*}((0,T)\times \T^d)$ is non positive, $\gamma^2$ is non negative and  $\gamma^2\in L^{1}((0,T)\times \T^d)$
,  $\gamma ^{\rm{sing}}\in \cM_+([0,T]\times \T^d)$
\item \begin{equation}
 \label{eq:43}
   \frac{\partial \phi} {\partial t}   + \nu \Delta \phi\ge \gamma, \quad \hbox{and }\quad \phi|_{t=T}\le u_T.
\end{equation}
\end{itemize}
It is clear that $\cK$ is convex.
The following lemma implies that $\phi$ has a trace in a very weak sense:
\begin{lemma}
  \label{sec:relaxed-problem-1}
Consider $(\phi,\gamma)\in \cK$. For any Lipschitz continuous map $\xi: \T^d\to \R$, the map $t\mapsto \int_{\T^d} \xi(x) \phi(t, x) dx$ has a BV representative 
on $[0,T]$. Moreover, if we note $\int_{\T^d} \xi(x)\phi(t^+, x) dx$ its right limit at $t\in [0,T)$, then the map $\xi \mapsto \int_{\T^d} \xi(x)\phi(t^+, x) dx$ 
can be extended to a  bounded linear form on  $\cC^0(\T^d)$.
\end{lemma}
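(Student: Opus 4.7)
The plan is to reduce the statement to a one-dimensional $BV$ property: for each Lipschitz $\xi$, the function $F_\xi(t) := \int_{\T^d} \xi(x)\phi(t,x)\,dx$ (defined for a.e.~$t$) should admit a $BV$ representative on $[0,T]$, after which right limits exist everywhere on $[0,T)$ and the extension to $\cC^0(\T^d)$ follows from a density argument.

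First I would test the inequality $\partial_t \phi + \nu \Delta \phi \ge \gamma$ from (\ref{eq:43}) against $\chi(t)\xi(x)$ with $\chi \in \cC_c^\infty(0,T)$ nonnegative and $\xi$ smooth. Since $\phi(t,\cdot) \in W^{1,\beta}(\T^d)$ for a.e.~$t$, integrating by parts in space turns the Laplacian term into $G_\xi(t) := -\nu \int_{\T^d} D\phi(t,x)\cdot D\xi(x)\,dx$, which lies in $L^\beta(0,T) \subset L^1(0,T)$ by H\"older. On the right-hand side, $\xi\,d\gamma$ projects onto a signed Radon measure $\mu_\xi$ on $(0,T)$; splitting $\gamma = \gamma^1 + \gamma^2 + \gamma^{\rm sing}$ and using $\gamma^1\in L^{q^*}$, $\gamma^2\in L^1$, $\gamma^{\rm sing}$ a finite positive measure, one checks that $\|\mu_\xi\|_{\cM(0,T)} \le C\|\xi\|_{\cC^0}$. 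The resulting distributional inequality on $(0,T)$ is
$$F_\xi' \ge G_\xi + \mu_\xi,$$
so $\tau_\xi := F_\xi' - G_\xi - \mu_\xi$ is a nonnegative distribution, hence a nonnegative Radon measure on $(0,T)$. A standard mollification of $\xi$ (uniform convergence of $\xi_\varepsilon$ plus bounded a.e. convergence of $D\xi_\varepsilon$) extends the inequality to every Lipschitz $\xi$.

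The main technical point, and the one I expect to be the principal obstacle, is to show that $\tau_\xi$ has \emph{finite} total mass; this is where the hypothesis $\phi \in L^\infty(0,T;\cM(\T^d))$ comes in. I would test $\tau_\xi$ against nonnegative cutoffs $\chi_\varepsilon \in \cC_c^\infty(0,T)$ with $\chi_\varepsilon \nearrow 1$ and $\|\chi_\varepsilon'\|_{L^1(0,T)} \le 2$, which yields
$$\tau_\xi(\chi_\varepsilon) = -\int_0^T \chi_\varepsilon' F_\xi\,dt - \int_0^T \chi_\varepsilon G_\xi\,dt - \int_{(0,T)} \chi_\varepsilon\,d\mu_\xi \le 2\|F_\xi\|_{L^\infty} + \|G_\xi\|_{L^1} + \|\mu_\xi\|_{\cM},$$
with $\|F_\xi\|_{L^\infty(0,T)} \le \|\xi\|_{\cC^0}\|\phi\|_{L^\infty(0,T;\cM)}$ by hypothesis. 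Letting $\varepsilon \to 0$ shows that $\tau_\xi$ has finite mass; thus $F_\xi' \in \cM(0,T)$ and $F_\xi$ has a $BV$ representative on $[0,T]$, which admits a right limit at every $t \in [0,T)$.

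Finally, the a.e.\ bound $|F_\xi(s)| \le \|\xi\|_{\cC^0}\|\phi\|_{L^\infty(0,T;\cM)}$ passes to the right limit along any approaching sequence in the full-measure set on which it holds, giving
$$|F_\xi(t^+)| \le \|\xi\|_{\cC^0(\T^d)} \|\phi\|_{L^\infty(0,T;\cM(\T^d))}$$
for every Lipschitz $\xi$. Since Lipschitz functions are dense in $\cC^0(\T^d)$ for the uniform norm, the linear form $\xi \mapsto F_\xi(t^+)$ extends uniquely to a bounded linear form on $\cC^0(\T^d)$, which is the desired conclusion.
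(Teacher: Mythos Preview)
Your argument is correct in outline and follows the same strategy as the paper: test the inequality \eqref{eq:43} against $\chi(t)\xi(x)$, deduce a one--sided bound on $F_\xi'$, and combine with the hypothesis $\phi\in L^\infty(0,T;\cM(\T^d))$ to conclude that $F_\xi$ is BV. There is one small oversight: the step ``$\tau_\xi:=F_\xi'-G_\xi-\mu_\xi$ is a nonnegative distribution'' is only valid when the test function $\chi(t)\xi(x)$ is nonnegative, i.e.\ when $\xi\ge 0$; you state only that $\chi\ge 0$ and $\xi$ is smooth. The fix is immediate: carry out the argument for $\xi\ge 0$, and for a signed Lipschitz $\xi$ write $\xi=\xi^+-\xi^-$ and note that $F_\xi=F_{\xi^+}-F_{\xi^-}$ is then a difference of BV functions. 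This is exactly how the paper proceeds.

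Beyond this point the two proofs differ slightly in bookkeeping. The paper drops $\gamma^{\rm sing}\ge 0$ at the outset (legitimate since $\xi\ge 0$), so that $F_\xi'$ is bounded below by an $L^1(0,T)$ function; then $F_\xi$ minus an absolutely continuous primitive is nondecreasing, and the $L^\infty(0,T;\cM)$ bound on $\phi$ makes it bounded, hence BV. You instead keep the full measure $\mu_\xi$ and bound the mass of the residual $\tau_\xi$ by pairing with cutoffs $\chi_\varepsilon$ and invoking $\|F_\xi\|_{L^\infty}\le \|\xi\|_{\cC^0}\|\phi\|_{L^\infty(0,T;\cM)}$. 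Both routes are equivalent; yours is more explicit about where the $L^\infty(0,T;\cM)$ hypothesis is used, while the paper's is shorter. The extension of $\xi\mapsto F_\xi(t^+)$ to $\cC^0(\T^d)$ is handled identically in both.
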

\begin{proof}
Consider first a nonnegative and Lipschitz continuous function $\xi : \T^d\to \R_+$; the following identity holds in the sense of distributions:
\begin{displaymath}
  -\frac {d}{dt}\left(t\mapsto\int _{\T^d} \xi(x) \phi(t, x) dx\right)- \nu\int_{\T^d}\nabla \phi(t,x) \cdot \nabla \xi(x) dx \ge  \int _{\T^d} \xi(x) \gamma ^{\rm{ac}}(t, x) dx, 
\end{displaymath}
The second and third integral in the latter inequality belong to $L^1 ((0,T))$. From this, we deduce that $t\mapsto \int_{\T^d} \xi(x) \phi(t, x) dx$ has a BV representative.
\\
If now $\xi$ is a Lipschitz continuous function that may change sign, $\xi : \T^d\to \R$, then we write $\xi=\xi^+-\xi^-$ and use the above argument separately for $\xi^+$ and $\xi^-$: we still obtain that  $t\mapsto \int_{\T^d} \xi(x) \phi(t, x) dx = \int_{\T^d} \xi^+(x) \phi(t, x) dx  - \int_{\T^d} \xi^-(x) \phi(t, x) dx $  has a BV representative.\\ 
The continuity of $\xi \mapsto \int_{\T^d} \xi(x)\phi(t^+, x) dx$ comes from the fact that  $\phi \in  L^\infty(0,T; \cM(\T^d))$.
\end{proof}
Thanks to Lemma~\ref{sec:relaxed-problem-1}, we may  define the concave functional $J$ on $\cK$ by 
\begin{equation}
  \label{eq:44}
J(\phi,\gamma)= \int_0^T \int_{\T^d} K(x, \gamma^{\rm ac}(t,x), D\phi(t,x)) dxdt +\int_{\T^d} m_0(x) \phi(0^+,x) dx, 
\end{equation}
and the relaxed optimization problem: 
\begin{equation}
  \label{eq:45}
\sup_{(\phi,\gamma)\in \cK} J(\phi,\gamma).
\end{equation}
In (\ref{eq:44}), note that $ (x,t)\mapsto K(x, \gamma^{\rm ac}(t,x), D\phi(t,x))$    is a measurable nonpositive function, so the first integral is meaningful and has a value in $[-\infty, 0]$.\\
Note also that, from (\ref{eq:30}), it is possible to restrict ourselves to the pairs $(\phi,\gamma)\in \cK$ such that 
$     \gamma^{\rm ac} (t,x)\le  -\ell (x,0)$   for almost every $(t,x)$ such that  $ D\phi(t,x)=0$.  Noting $\widetilde \cK$ the set
\begin{displaymath}
  \widetilde \cK= \left\{   (\phi,\gamma)\in \cK: 
     \gamma^{\rm ac} (t,x)\le  -\ell (x,0) \hbox{ for almost every }(t, x) \hbox{ s.t. }    D\phi(t,x)=0    \right\},
\end{displaymath}
we have 
\begin{equation}
\label{eq:46}
\sup_{(\phi,\gamma)\in \cK} J(\phi,\gamma)= \sup_{ (\phi,\gamma)\in \widetilde \cK}
 J(\phi,\gamma).    
\end{equation}

\begin{lemma}
  \label{sec:relaxed-problem-3}
For any $(\phi,\gamma)\in \widetilde \cK$  such that
\begin{displaymath}
  \int_{0}^T\int_{\T^d} K(x, \gamma^{\rm ac}(t,x), D\phi (t,x)) dxdt >- \infty,
\end{displaymath}
 for any $(m,z)\in \cK_1$ such that $m\in L^q((0,T)\times \T^d)$ and 
\[\ds \int_0^T \int_{\T^d} \tilde L (x, m( t,x), z(t,x)) dxdt <+\infty,\]
the following holds:\\
 for almost any $t\in (0,T)$, 
\begin{equation}
  \label{eq:49}
  \begin{split}
&-\int_{\T^d} m(T,x)   u_T( x)  dx +\int_{\T^d} m(t,x)   \phi(t, x)  dx
+\int_{s=t}^T \int_{\T^d} m(s,x)  \left (\gamma ^{\rm ac} (s,x) + H(x,m(s,x), D\phi(s,x) ) \right)   dx ds\\  \le& 
\int_{s=t}^T \int_{\T^d} \tilde L (x, m(s,x),z(s,x))dx ds,    
  \end{split}
\end{equation}
\begin{equation}
\label{eq:50}
\begin{split}
& -\int_{\T^d} m(t,x)   \phi(t, x)  dx +\int_{\T^d} m_0(x)   \phi(0^+, x)  dx
+\int_{s=0}^t \int_{\T^d} m(s,x)  \left (\gamma ^{\rm ac} (s,x) + H(x,m(s,x), D\phi(s,x) ) \right)     dx ds \\ \le & \ds 
\int_{s=0}^t \int_{\T^d} \tilde L (x, m(s,x),z(s,x))dx ds,  
\end{split}
\end{equation}
and the meaning of $\int_{\T^d} m(t,x)   \phi(t, x)  dx$ will be explained in the proof.\\
Moreover, if 
\begin{equation}
  \label{eq:51}
\begin{split}
&  -\int_{\T^d} m(T,x)   u_T( x)  dx +\int_{\T^d} m_0(x)   \phi(0^+, x)  dx
+\int_{s=0}^T \int_{\T^d} m(s,x) 
 \left (\gamma ^{\rm ac} (s,x) + H(x,m(s,x), D\phi(s,x) ) \right)      dx ds \\  =& \ds
\int_{s=0}^T\int_{\T^d} \tilde L (x, m(s,x),z(s,x))dx ds,
  \end{split}
\end{equation}
  then $z(t,x)=m(t,x)H_p(x, m(t,x),D\phi(t,x))$ holds almost everywhere.
\end{lemma}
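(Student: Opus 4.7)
The strategy is to test the distributional inequality $\partial_t\phi + \nu\Delta\phi \ge \gamma$ against the nonnegative $m$, integrate by parts using the continuity equation for $(m,z)$, drop the singular part of $\gamma$ (which contributes nonnegatively against $m \ge 0$), and close with the Fenchel--Young inequality $mH(x,m,D\phi) - z\cdot D\phi \le \widetilde L(x,m,z)$ coming from (\ref{eq:22}). The endpoint condition $\phi|_{t=T}\le u_T$ together with $m(T)\ge 0$ then converts $\int m(T)\phi(T^-)$ into $\int m(T)u_T$, producing (\ref{eq:49}); (\ref{eq:50}) is obtained by the symmetric argument on $(0,t)$ using $m(0)=m_0$.

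For the rigorous integration by parts I would mollify $\phi$ in space by a symmetric nonnegative kernel $\rho_\epsilon$: for $\phi_\epsilon = \phi *_x \rho_\epsilon$ the inequality $\partial_t\phi_\epsilon + \nu\Delta\phi_\epsilon \ge \gamma *_x \rho_\epsilon$ is preserved, with $\phi_\epsilon$ now smooth in $x$. Multiplying by $m\ge 0$, integrating over $(t,T)\times\T^d$, and testing the continuity equation against $\phi_\epsilon$ yields after cancellation of the $\nu\Delta$ terms
\begin{equation*}
\int_{\T^d} m(T)\phi_\epsilon(T^-)\,dx - \int_{\T^d} m(t)\phi_\epsilon(t^+)\,dx - \int_t^T\!\!\int_{\T^d} z\cdot D\phi_\epsilon\,dx\,ds \ \ge\ \int_t^T\!\!\int_{\T^d} m(\gamma *_x\rho_\epsilon)^{\rm ac}\,dx\,ds,
\end{equation*}
the nonnegative singular part having been discarded. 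Sending $\epsilon\to 0$ by dominated convergence, controlled by the pointwise bound $|z\cdot D\phi| \le \widetilde L(x,m,z) - mH(x,m,D\phi)$ from (\ref{eq:22}) together with the integrability of $\widetilde L$ and of $m[\gamma^{\rm ac} + H(x,m,D\phi)]$ (obtained from the hypothesis $\int K > -\infty$ combined with the pointwise bound $K \le m[\gamma^{\rm ac} + H(\cdot,m,D\phi)]$ coming from (\ref{eq:29})), produces the same inequality for $\phi$ itself. The pairings $\int m(s)\phi(s^+)\,dx$ are meaningful: for a.e.\ $s$, $\phi(s,\cdot)\in W^{1,\beta}\subset L^\beta$ and $m(s,\cdot)\in L^q \subset L^{\beta^*}$ (since $q\ge\beta^*$ by H3) so H\"older applies, while at exceptional $s$ Lemma~\ref{sec:relaxed-problem-1} together with the weak-$*$ H\"older continuity of $s\mapsto m(s)$ (Lemma~\ref{sec:two-optim-probl-1}) furnishes a well-defined extension.

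Adding $\int_t^T\!\!\int m H(x,m,D\phi)\,dx\,ds$ to both sides, using $\phi(T^-) \le u_T$ against $m(T)\ge 0$, and applying the pointwise Fenchel--Young inequality from (\ref{eq:22}) yields (\ref{eq:49}). The argument for (\ref{eq:50}) is parallel, integrating over $(0,t)$ and using $m(0)=m_0$.

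For the equality case: assuming (\ref{eq:51}) forces equality in each step of the chain above, in particular in the pointwise Fenchel--Young inequality a.e. Hence $p = D\phi$ realises the supremum in (\ref{eq:22}), and differentiating in $p$ gives $z = m H_p(x,m,D\phi)$ a.e.\ on $\{m>0\}$. On $\{m=0\}$, finiteness of $\widetilde L$ already forces $z = 0$, while $mH_p$ vanishes naturally there (since $mH_p(x,m,p) \sim m^{1-\alpha}|p|^{\beta-1}$ with $\alpha<1$), so the identity holds everywhere. The principal technical obstacle is not the algebraic Fenchel--Young step but the rigorous integration by parts and trace identification with only distributional regularity of $\phi$, handled by the mollification and application of Lemma~\ref{sec:relaxed-problem-1} above, following the pattern of \cite{cardaliaguet2014second}.
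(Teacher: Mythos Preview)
Your overall strategy—integrate the subsolution inequality against $m$, integrate by parts via the continuity equation, drop the singular part, and close with Fenchel--Young from (\ref{eq:22})—is exactly the paper's. The difference, and the gap, is in the regularisation step.

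You mollify $\phi$ in space only. This leaves $\phi_\epsilon$ merely BV in $t$ (pointwise in $x$, via Lemma~\ref{sec:relaxed-problem-1}), so neither ``multiplying the inequality by $m$'' nor ``testing the continuity equation against $\phi_\epsilon$'' is a legal distributional pairing: $m$ is only $L^q$ and $\phi_\epsilon$ is not $C^1$ in $t$, so the boundary terms at $s=t,T$ and the time-derivative pairings are not justified by what you wrote. The paper goes the other way: it mollifies $(m,z)$ in \emph{space and time} to obtain smooth $(m_\epsilon,z_\epsilon)$, which \emph{are} admissible test functions against the rough $\phi$; the price is a DiPerna--Lions commutator $R_\epsilon$ in the regularised continuity equation, controlled by $R_\epsilon\to 0$ in $L^q$ since $m\in L^q$ (\cite{MR1022305}), paired with $D\phi\in L^\beta$ and $\beta\ge q^*$. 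The anisotropic kernel (time-scale $\epsilon^\delta$ with $\delta$ large) is needed separately to identify the trace at $t=0$ via the H\"older continuity of $t\mapsto m(t)$ from Lemma~\ref{sec:two-optim-probl-1}.

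Your limit passage is also too quick. Dominated convergence does not obviously apply: there is no uniform pointwise majorant for $m_\epsilon|D\phi|^\beta/m_\epsilon^\alpha$ or for $m(\gamma^2*_x\rho_\epsilon)$ (recall $\gamma^2$ is only $L^1$ while $m$ is only $L^q$). The paper instead uses Fatou's lemma on $m_\epsilon(\gamma^{\rm ac}+H(\cdot,m_\epsilon,D\phi))$, which is legitimate because this integrand is bounded below by $K(x,\gamma^{\rm ac},D\phi)$, integrable by hypothesis; and for $\int \widetilde L(x,m_\epsilon,z_\epsilon)$ it uses convexity of $\widetilde L$ in $(m,z)$ together with the Lipschitz-in-$x$ assumption (\ref{eq:11}) to get a $\limsup$ bound. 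Your equality-case argument is fine and matches the paper's conclusion.
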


\begin{remark}\label{sec:defin-first-prop-1}
  Before proving Lemma \ref{sec:relaxed-problem-3}, note that the integrals of \\
$(s,x)\mapsto m(s,x)  \left (\gamma ^{\rm ac} (s,x) + H(x,m(s,x), D\phi(s,x) ) \right)$ in (\ref{eq:49})-(\ref{eq:51})
have a meaning in $(-\infty, +\infty]$, because  
$ m(s,x)  \left (\gamma ^{\rm ac} (s,x) + H(x,m(s,x), D\phi(s,x) ) \right) \ge K(x, \gamma ^{\rm ac} (s,x),  D\phi(s,x))$ a.e. and from the hypothesis of Lemma~\ref{sec:relaxed-problem-3}, the latter  is integrable in  $(0,T)\times \T^d$ with a nonpositive integral.
\end{remark}

\begin{proof}
  We first extend $m$ to $[-1,T+1]$ by setting $m(t)= m_0$ for $t\le 0$ and $m(t) =m(T)$ for $t>T$.
 Note that $m(T)$ is well defined from Lemma \ref{sec:two-optim-probl-1}. Similarly, we extend   
 $z$ to $[-1,T+1]$ by setting $z(t)= 0$ if $t\notin (0,T)$.\\

Consider a regularizing kernel $\eta_\epsilon(t,x)= \epsilon^{-d-\delta } h_1(\frac t {\epsilon^\delta}) h_2(\frac x {\epsilon}) $ where $h_1$ is a smooth even and nonnegative function supported in $[-1/2,1/2]$ such that $\int_\R h_1(t)dt=1$, $h_2$ is a smooth symmetric nonnegative function  supported in $[-1/2,1/2]^d$  such that $\int_{\R^d} h_2(x)dx=1$, and $\delta$ will be chosen later.
We define $m_\epsilon= \eta_\epsilon \star m$ and $z_\epsilon= \eta_\epsilon \star z$ in $(-1/2, T+1/2)\times \T^d$.
We can see that in $(0,T)\times \T^d$, 
\begin{equation}
  \label{eq:52}
 \frac{\partial m_\epsilon} {\partial t}   - \nu \Delta  \left (m_\epsilon \;1_{t\in (0,T)}\right) +  \diver   z_\epsilon  =\diver R_\epsilon   
\end{equation}
where $R_\epsilon= \eta_\epsilon \star \left (D m \;1_{t\in (0,T)}\right)- 1_{t\in (0,T)} D (\eta_\epsilon\star m)$. 
From \cite{MR1022305}, we know that $R_\epsilon\to 0$ in $L^q( (0,T)\times \T^d)$ as $\epsilon\to 0$, 
because $m\in L^q( (0,T)\times \T^d)$.
\\
We know that $\phi(T)\le u_T$. Thus $\int_{\T^d} \phi(T, x) m_\epsilon(T,x) dx \le \int_{\T^d} u_T( x) m_\epsilon(T,x) dx$ and the latter integral converges to
 $\int_{\T^d} u_T( x) m(T,x) dx$ in view of Lemma \ref{sec:two-optim-probl-1}.
\\
We know that $m_\epsilon\to m$ in $L^q ( (0,T)\times \T^d)$ and that $\phi\in L^\beta   ( (0,T)\times \T^d)$. 
From Assumption (H3), 
this implies that
 $\phi m_\epsilon\to \phi m$ in $L^1 ( (0,T)\times \T^d)$. 
\\ 
From  Lemma \ref{sec:relaxed-problem-1}, we know that  out of a countable set, $\phi(t^-)=\phi(t^+)=\phi(t)$ and that  
$\int_{\T^d} \phi(t, x) m_\epsilon(t,x) dx $ is well defined. \\
 From the latter two observations,    up to the extraction of a subsequence, 
we may assume that \[ \int_{\T^d} \phi(t, x) m_\epsilon(t,x) dx  \to \int_{\T^d} \phi(t, x) m(t,x) dx \]
as $\epsilon\to 0$,  with  $\phi(t^-)=\phi(t^+)=\phi(t)$,
 for almost all $t\in (0,T)$.  Let $t\in (0,T)$ be such  that the latter is true. \\
Since $(\phi, \gamma)\in \cK$,
\begin{equation}
  \label{eq:53}
  \begin{split}
&\ds \int_t^T \int_{\T^d} \left(\phi(s,x) \frac {\partial m_\epsilon}{\partial t} (s,x) +\nu D\phi\cdot D(1_{s\in (0,T)} m_\epsilon (s,x) ) \right) dxds\\
&\ds- \int_{\T^d} u_T( x) m_\epsilon(T,x) dx + \int_{\T^d} \phi(t, x) m_\epsilon(t,x) dx 
\\
\le & - \int_t^T \int_{\T^d}  \gamma^{\rm ac } (s,x)m_\epsilon(s,x) dx ds    
  \end{split}
\end{equation}
On the other hand, from (\ref{eq:52}), 
\begin{equation}
  \label{eq:54}
  \begin{split}
&\ds \int_t^T \int_{\T^d} \left(\phi(s,x) \frac {\partial m_\epsilon}{\partial t} (s,x) +\nu D\phi\cdot D(1_{s\in (0,T)} m_\epsilon (s,x) ) \right) dxds\\
= & - \int_t^T \int_{\T^d}   D\phi(s,x) \cdot R_\epsilon(s,x)  dx ds     +\int_t^T \int_{\T^d}   D\phi(s,x) \cdot z_\epsilon(s,x)  dx ds    
  \end{split}
\end{equation}
We deduce from (\ref{eq:53}) and (\ref{eq:54}) that
\begin{equation}
  \label{eq:55}
  \begin{split}
&- \int_t^T \int_{\T^d}   D\phi(s,x) \cdot R_\epsilon(s,x)  dx ds     +\int_t^T \int_{\T^d}   D\phi(s,x) \cdot z_\epsilon(s,x)  dx ds   \\
&\ds -\int_t^T \int_{\T^d} m_\epsilon(s,x) H(x,m_\epsilon(s,x) , D\phi (s,x) ) ds ds
- \int_{\T^d} u_T( x) m_\epsilon(T,x) dx +\int_{\T^d} \phi(t, x) m_\epsilon(t,x) dx 
 \\
\le &  - \int_t^T \int_{\T^d}  \gamma^{\rm ac } (s,x)m_\epsilon(s,x) dx ds    -\int_t^T \int_{\T^d} m_\epsilon(s,x) H(x,m_\epsilon(s,x) , D\phi (s,x) ) ds ds.
  \end{split}
\end{equation}
Finally, we use the fact that 
\begin{equation}
  \label{eq:56}
  \begin{split}
&\ds   \int_t^T \int_{\T^d}  \left( D\phi(s,x) \cdot z_\epsilon(s,x)  -m_\epsilon(s,x) H(x,m_\epsilon(s,x) , D\phi (s,x) ) \right)dx ds \\ \ge &\ds   -\int_t^T \int_{\T^d}\tilde L
(x,m_\epsilon(s,x) , z_\epsilon (s,x) ) dx ds.     
  \end{split}
\end{equation}
Combining this with (\ref{eq:55}), we find that
\begin{equation}
  \label{eq:57}
  \begin{split}
&\ds - \int_{\T^d} u_T( x) m_\epsilon(T,x) dx +\int_{\T^d} \phi(t, x) m_\epsilon(t,x) dx   
\\
\le  &- \int_t^T \int_{\T^d} m_\epsilon(s,x) \left(   \gamma^{\rm ac } (s,x) + H(x,m_\epsilon(s,x) , D\phi (s,x) ) \right) ds ds \\ & \ds +
\int_t^T \int_{\T^d}\tilde L
(x,m_\epsilon(s,x) , z_\epsilon (s,x) ) dx ds   +\int_t^T \int_{\T^d}   D\phi(s,x) \cdot R_\epsilon(s,x)  dx ds.     
  \end{split}
\end{equation}
\begin{enumerate}
\item We have seen that the first line of (\ref{eq:57}) tends to $- \int_{\T^d} u_T( x) m(T,x) dx +\int_{\T^d} \phi(t, x) m(t,x) dx  $.
\item
From Remark   \ref{sec:defin-first-prop-1},   or more precisely 
the facts that $ \int_{0}^T\int_{\T^d} K(x, \gamma^{\rm ac}(t,x), D\phi (t,x)) dxdt >- \infty$ and that
\begin{displaymath}
  \gamma^{\rm ac } (s,x)   +H(x,m_\epsilon(s,x) , D\phi (s,x) ) m_\epsilon(s,x) \ge K(x, \gamma^{\rm ac } (s,x) , D\phi (s,x)  ),
\end{displaymath}
we can use  Fatou lemma and  get that 
\begin{displaymath}
  \begin{split}
    &\int_t^T \int_{\T^d}  \left(\gamma^{\rm ac } (s,x)        +H(x,m(s,x) , D\phi (s,x) ) \right)        m(s,x) dx ds 
\\ \le &  \liminf_{\epsilon\to 0}  \int_t^T \int_{\T^d}   \left( \gamma^{\rm ac } (s,x)    
+H(x,m_\epsilon(s,x) , D\phi (s,x) )\right)
m_\epsilon(s,x) dx ds .
  \end{split}
\end{displaymath}
\item $\int_t^T \int_{\T^d}   D\phi(s,x) \cdot R_\epsilon(s,x)  dx ds$ tends to $0$ because $R_\epsilon$ tends to $0$ in $L^q((0,T)\times \T^d)$ and $D\phi\in L^\beta((0,T)\times \T^d))$ with $\beta\ge q^*$.
\item Finally, from the convexity of $\tilde L$ with respect to $(m, z)$ and from (\ref{eq:11}), we see that 
\begin{displaymath}
  \limsup_{\epsilon\to 0} \int_t^T \int_{\T^d}\tilde L(
x,m_\epsilon(s,x) , z_\epsilon (s,x) ) dx ds \le 
 \int_t^T \int_{\T^d}\tilde L
(x,m(s,x) , z(s,x) ) dx ds .
\end{displaymath}
\end{enumerate}
From (\ref{eq:57}) and all the points above, we deduce (\ref{eq:49}).
\\
 Similarly as for (\ref{eq:57}), we obtain that
\begin{equation}
\label{eq:58}
  \begin{split}
&\ds - \int_{\T^d} \phi(t, x) m_\epsilon(t,x) dx  +\int_{\T^d} \phi(0, x) m_\epsilon(0,x) dx 
\\
\le  &- \int_0^t \int_{\T^d}  \gamma^{\rm ac } (s,x)m_\epsilon(s,x) dx ds    -\int_0^t \int_{\T^d} m_\epsilon(s,x) H(x,m_\epsilon(s,x) , D\phi (s,x) ) dx ds \\ & \ds +
\int_0^t \int_{\T^d}\tilde L
(x,m_\epsilon(s,x) , z_\epsilon (s,x) ) dx ds +\int_0^t \int_{\T^d}   D\phi(s,x) \cdot R_\epsilon(s,x)  dx ds,
  \end{split}
\end{equation}
in which $\int_{\T^d} \phi(0, x) m_\epsilon(0,x) dx $ has a meaning from Lemma \ref{sec:relaxed-problem-1}. 
We claim that $\int_{\T^d} \phi(0, x) m_\epsilon(0,x) dx\to \int_{\T^d} \phi(0, x) m_0(x) dx$ as $\epsilon\to 0$: 
indeed,  let $\zeta\in (0,1)$ be the H{\"o}lder exponent in Lemma \ref{sec:two-optim-probl-1}: in view of Remark \ref{sec:two-optim-probl-3}, we know that 
$\zeta \ge \min( 1/2, (1-\alpha)/\beta)$;
calling $h_{2,\epsilon} (x)= \epsilon^{-d} h_2(\frac x \epsilon)$ and  $h_{1,\epsilon} (t)= \epsilon^{-\delta} h_1(\frac t {\epsilon^\delta})$, we get that for all $x\in \T ^d$,
\begin{displaymath}
  \begin{split}
  |m_\epsilon(0,x)-  (h_{2,\epsilon} \star m_0)(x)| &=   \left|\int \int    ( m(s,y)-  m_0(y))  h_{1,\epsilon}(-s) h_{2,\epsilon}(x-y) ds dy \right|\\
& \le  C \int s^{\delta \zeta -d -1} h_{1,\epsilon}(-s) ds   \le C \epsilon ^{\delta \zeta -d -1 }.
  \end{split}
\end{displaymath}
Choosing $\delta$ large enough, (i.e. such that $\delta \zeta -d -1>0$) and 
using the fact that  $m_0$ is $\cC^1$,
\begin{displaymath}
  \lim_{\epsilon \to 0} \| h_{2,\epsilon} \star m_0  -m_0\|_{L^\infty (\T^d)} = 0. 
\end{displaymath}
Therefore, $ \lim_{\epsilon \to 0} \| m_\epsilon(0,\cdot) -m_0\|_{L^\infty (\T^d)} = 0$.
The claim follows from  the continuity stated in Lemma  \ref{sec:relaxed-problem-1}.
\\
The fact that $\int_{\T^d} \phi(0, x) m_\epsilon(0,x) dx\to \int_{\T^d} \phi(0, x) m_0(x) dx$  and the arguments above imply (\ref{eq:50}).
\\
Let us now suppose that (\ref{eq:51}) holds: then the inequalities in (\ref{eq:49}) and (\ref{eq:50}) are equalities, for almost all $t$. 
For $\sigma>0$ let us introduce the set 
\begin{displaymath}
  E_{\sigma}(t)= \left\{
    \begin{array}[c]{l}
(s,y)\in [t,T]\times \T^d \; : \;  \\
m(s,y) H(x,m(s,y) , D\phi (s,y) ) -\tilde L
(y,m(s,y) , z (s,y) )\le  z (s,y) \cdot D\phi (s,y)  - \sigma
          \end{array}
\right\}.
\end{displaymath}
If $| E_{\sigma}(t)|>0$, then for $\epsilon>0$ small enough, $| E_{\epsilon,\sigma}(t)|>| E_{\sigma}(t)| /2$, where
\begin{displaymath}
  E_{\epsilon,\sigma}(t)= \left\{
    \begin{array}[c]{l}
(s,y)\in [t,T]\times \T^d \; : \;  \\
m_\epsilon(s,y) H(x,m_\epsilon(s,y) , D\phi (s,y) ) -\tilde L
(y,m_\epsilon(s,y) , z_\epsilon (s,y) )\le  z_\epsilon (s,y) \cdot D\phi (s,y)  - \frac \sigma 2
          \end{array}
\right\}.
\end{displaymath}
Then (\ref{eq:56}) becomes 
\begin{equation}
  \begin{split}
&\ds   \int_t^T \int_{\T^d}  \left( D\phi(s,x) \cdot z_\epsilon(s,x) - m_\epsilon(s,x) H(x,m_\epsilon(s,x) , D\phi (s,x) ) \right)dx ds \\ \ge &\ds   -\int_t^T \int_{\T^d}\tilde L
(x,m_\epsilon(s,x) , z_\epsilon (s,x) ) dx ds     +\frac \sigma  4    |E_{\sigma}(t)|,     
  \end{split}
\end{equation}
which implies that 
\begin{displaymath}
  \begin{split}
&-\int_{\T^d} m(T,x)   u_T( x)  dx +\int_{\T^d} m(t,x)   \phi(t, x)  dx
+\int_{s=t}^T \int_{\T^d} m(s,x) \left(\gamma ^{\rm ac} (s,x)+ H(x,m(s,x), D\phi(s,x) ) \right)  dx ds \\  \le& 
\int_{s=t}^T \int_{\T^d} \tilde L (x, m(s,x),z(s,x))dx ds     -\frac \sigma  4    |E_{\sigma}(t)|,  
  \end{split}
\end{displaymath}
  in contradiction with the fact that there is an equality in (\ref{eq:49}).
\\
Hence, $m(s,y) H(x,m(s,y) , D\phi (s,y) ) -\tilde L
(y,m(s,y) , z (s,y) )=  z (s,y) \cdot D\phi (s,y)$  holds almost everywhere. In view of (\ref{eq:22}), this shows that $ z (s,y)=  m(s,y) H_p(x,m(s,y) , D\phi (s,y) )$ a.e. in $\{m>0\}$. Furthermore, from (\ref{eq:21}) and the fact that $\ds \int_0^T \int_{\T^d} \tilde L (x, m( t,x), z(t,x)) dxdt <+\infty$, we see that
 $z(s,y)=0$  a.e. in  $\{m=0\}$. Hence, $ z (s,y)=  m(s,y) H_p(x,m(s,y) , D\phi (s,y) )$ a.e. in $(0,T)\times \T^d$.
\end{proof}
\begin{proposition}
\label{sec:relaxed-problem-2}
  \begin{displaymath}
\sup_{(\phi,\gamma)\in \cK} J(\phi,\gamma)=\sup_{\phi\in \cK_0} \cA(\phi).    
  \end{displaymath}
\end{proposition}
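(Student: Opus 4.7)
The plan is to prove the proposition as two opposite inequalities, drawing only on Lemma~\ref{sec:two-optim-probl-2} (the Fenchel--Rockafellar duality) and Lemma~\ref{sec:relaxed-problem-3}.

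For the easy direction $\sup_{\phi\in\cK_0}\cA(\phi) \le \sup_{(\phi,\gamma)\in\cK} J(\phi,\gamma)$, I would embed $\cK_0$ into $\cK$ by setting $\gamma := \frac{\partial\phi}{\partial t} + \nu\Delta\phi$. Since $\phi\in\cC^2([0,T]\times\T^d)$, $\gamma$ is bounded and continuous, so its positive and negative parts supply the decomposition $\gamma=\gamma^1+\gamma^2$ with $\gamma^1 := -\gamma^-\le 0$ in $L^{q^*}$ and $\gamma^2 := \gamma^+\ge 0$ in $L^1$ required by the definition of $\cK$; the constraints in (\ref{eq:43}) hold with equality, so $(\phi,\gamma)\in\cK$. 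The pointwise minimization in $m$ performed inside the proof of Lemma~\ref{sec:two-optim-probl-2} gives, for every $\phi\in\cK_0$, the explicit formula
\begin{displaymath}
\cA(\phi) \;=\; \int_0^T\!\!\int_{\T^d} K\bigl(x,\tfrac{\partial\phi}{\partial t}+\nu\Delta\phi,D\phi\bigr)\,dx\,dt + \int_{\T^d} m_0(x)\phi(0,x)\,dx,
\end{displaymath}
which coincides with $J(\phi,\gamma)$ since $\gamma^{\mathrm{ac}}=\gamma$ and $\phi(0^+,\cdot)=\phi(0,\cdot)$ by continuity.

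For the harder reverse inequality, I would play off against the minimizer $(m^*,z^*)\in\cK_1$ provided by Lemma~\ref{sec:two-optim-probl-2}: it satisfies $m^*\in L^q((0,T)\times\T^d)$, $\int_0^T\!\int_{\T^d}\widetilde L(x,m^*,z^*)\,dx\,dt < +\infty$, and $\cB(m^*,z^*)=\sup_{\cK_0}\cA$. Fix $(\phi,\gamma)\in\cK$; by (\ref{eq:46}) I may assume $(\phi,\gamma)\in\widetilde\cK$, and I may also assume $J(\phi,\gamma) > -\infty$ (otherwise the desired inequality is trivial), so that the hypothesis $\iint K(x,\gamma^{\mathrm{ac}},D\phi) > -\infty$ of Lemma~\ref{sec:relaxed-problem-3} is met. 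Adding (\ref{eq:49}) and (\ref{eq:50}) at a common admissible time $t\in(0,T)$ makes the $\int\phi(t,\cdot)m^*(t,\cdot)$ terms cancel and produces
\begin{displaymath}
-\!\int_{\T^d}\! m^*(T)u_T + \!\int_{\T^d}\! m_0\,\phi(0^+) + \!\int_0^T\!\!\!\int_{\T^d}\! m^*\bigl(\gamma^{\mathrm{ac}}+H(x,m^*,D\phi)\bigr) \le \!\int_0^T\!\!\!\int_{\T^d}\!\widetilde L(x,m^*,z^*).
\end{displaymath}
By the very definition of $K$ as an infimum over $\mu\ge 0$, tested at $\mu=m^*$, one has pointwise $m^*(\gamma^{\mathrm{ac}}+H(x,m^*,D\phi))\ge K(x,\gamma^{\mathrm{ac}},D\phi)$. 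Substituting and rearranging yields $J(\phi,\gamma)\le\cB(m^*,z^*)$, and taking the supremum over $(\phi,\gamma)\in\cK$ closes the argument.

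The substantive work has already been absorbed into the two invoked lemmas: Lemma~\ref{sec:two-optim-probl-2} turns the primal optimization into a dual one and produces $(m^*,z^*)$, while Lemma~\ref{sec:relaxed-problem-3} is the delicate double regularization that legitimizes the crossed integrals $\iint m^* H(x,m^*,D\phi)$ despite the low regularity of relaxed test functions. Given these, the present proposition is essentially bookkeeping: the sharp pointwise inequality $m\gamma + mH \ge K$ used in one direction, and the ansatz $\gamma = \partial_t\phi + \nu\Delta\phi$ used in the other, bracket $\sup_{\cK_0}\cA$ and $\sup_\cK J$ between the same common value $\cB(m^*,z^*)$.
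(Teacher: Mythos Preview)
Your proof is correct and follows essentially the same route as the paper's. The reverse inequality is handled identically: restrict to $\widetilde\cK$, invoke Lemma~\ref{sec:relaxed-problem-3} with the minimizer $(m^*,z^*)$ of Lemma~\ref{sec:two-optim-probl-2}, add (\ref{eq:49}) and (\ref{eq:50}), and use the pointwise bound $K(x,\gamma^{\rm ac},D\phi)\le m^*(\gamma^{\rm ac}+H(x,m^*,D\phi))$ to conclude $J(\phi,\gamma)\le\cB(m^*,z^*)$. The only difference is in the easy direction: the paper embeds $\phi\in\cK_0$ by taking $\gamma=\gamma^1+\gamma^2$ built from the optimal $m$ for $\cA(\phi)$ (namely $\gamma^1=-\ell(\cdot,m)-m\ell_m(\cdot,m)$ and $\gamma^2=(1-\alpha)m^{-\alpha}|D\phi|^\beta 1_{m>0}$), whereas you take the simpler $\gamma=\partial_t\phi+\nu\Delta\phi$ directly; both choices yield $J(\phi,\gamma)=\cA(\phi)$ via the identity (\ref{eq:32}), so this is a cosmetic variation.
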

\begin{proof}
   It is clear that $\forall \phi\in \cK_0$, $-\infty< \cA(\phi)\le \sup_{(\phi,\gamma)\in \cK} J(\phi,\gamma)$: indeed consider $m$ achieving $\cA(\phi, m)=\cA(\phi)$,
i.e. $m=\psi\left (\frac{\partial \phi} {\partial t}  + \nu \Delta \phi, D\phi  \right)$. We know that $ \int_0^T \int_{\T^d} m^{1-\alpha} |D\phi|^\beta <+\infty$ and that 
 $ \int_0^T \int_{\T^d} m^q <\infty$.
 Let us take $\gamma= \gamma^1+\gamma^2$ where $\gamma^1 = -\ell(\cdot, m)-m \frac {\partial \ell}{\partial m} (\cdot, m)$ and
 $\gamma^2= \frac { (1-\alpha) }{m^{\alpha}} |D\phi|^\beta 1_{m>0}$. It is easy to check that $(\phi,\gamma)\in \cK$ and that $\cA(\phi)=J(\phi, \gamma)$. Hence $\sup_{(\phi,\gamma)\in \cK} J(\phi,\gamma)\ge \sup_{\phi\in \cK_0} \cA(\phi)$.\\
For the reverse inequality, consider $(\phi,\gamma)\in \widetilde \cK$ 
 such that  
$J(\phi,\gamma)>-\infty$. This implies that \[\ds \int_0^T \int_{\T^d} K(x, \gamma^{\rm ac}(s,x),   D\phi(s,x) ) dx ds>-\infty.\]
Let $(m^* , z^*)$ be the pair of functions achieving (\ref{eq:23}), see Lemma~\ref{sec:two-optim-probl-2}.
 From Lemma \ref{sec:relaxed-problem-3}, 
\begin{displaymath}
  \begin{split}
    J(\phi,\gamma)& \le \ds \int_0^T \int_{\T^d}  m^*(t,x)\left ( \gamma^{\rm ac}(t,x) + H(x, m^*(t,x), D\phi(t,x))\right)dx dt +\int_{\T^d} m_0(x) \phi(0^+,x) dx\\
    &\le \int_{s=0}^T \int_{\T^d} \tilde L (x, m^* (s,x),z^* (s,x))dx ds + \int_{\T^d} m^*(T,x)   u_T( x)  dx \\
    &= \cB(m^*,z^*).
  \end{split}
\end{displaymath}
Hence,
\begin{displaymath}
   J(\phi,\gamma)\le \min_{(m,z)\in \cK_1} \cB(m,z) =\sup_{\phi\in \cK_0} \cA(\phi).
\end{displaymath}
and we conclude using (\ref{eq:46}).
\end{proof}

\subsection{Existence of a solution of the relaxed problem}
\label{sec:exist-solut-relax}
\begin{proposition}
  \label{sec:exist-solut-relax-1}
The relaxed problem (\ref{eq:45}) has at least a solution $(\phi, \gamma)\in \widetilde \cK$.
\end{proposition}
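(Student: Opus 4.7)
The plan is to construct an optimizer as a weak-$*$ limit of the maximizing sequence built in Section~\ref{sec:priori-estim-maxim}. Take $(\phi_n)_n\subset\cK_0$ with $\cA(\phi_n)\to M:=\sup_{\cK_0}\cA$; let $m_n\ge 0$ achieve $\cA(\phi_n)=\cA(\phi_n,m_n)$ and set $\gamma_n:=-H(\cdot,m_n,D\phi_n)-m_n H_m(\cdot,m_n,D\phi_n)=\gamma_n^1+\gamma_n^2$, as in Lemma~\ref{sec:priori-estim-maxim-5}. The first-order condition (\ref{eq:36}) yields $\partial_t\phi_n+\nu\Delta\phi_n\ge\gamma_n$, and since the minimum in $K(x,\gamma_n,D\phi_n)=\min_\mu \mu(\gamma_n+H(x,\mu,D\phi_n))$ is attained at $\mu=m_n$ by the very definition of $\gamma_n$, the pointwise identity $K(x,\gamma_n,D\phi_n)=-m_n^2 H_m(\cdot,m_n,D\phi_n)$ holds, so that $J(\phi_n,\gamma_n)=\cA(\phi_n)\to M$. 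Using Lemma~\ref{sec:priori-estim-maxim-5} and Banach--Alaoglu, I would extract (not relabelled) subsequences with $D\phi_n\rightharpoonup D\phi$ in $L^\beta$, $\phi_n\stackrel{*}{\rightharpoonup}\phi$ in $L^\infty(0,T;\cM(\T^d))$, $\gamma_n^1\rightharpoonup \gamma^1$ in $L^{q^*}$, and $\gamma_n^2\stackrel{*}{\rightharpoonup}\mu\ge 0$ in $\cM([0,T]\times\T^d)$.

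Next I would set $\gamma:=\gamma^1+\mu$, so that $\gamma^{\mathrm{ac}}=\gamma^1+\mu^{\mathrm{ac}}$ and $\gamma^{\mathrm{sing}}=\mu^{\mathrm{sing}}\ge 0$. The convexity of $m\mapsto m\ell(x,m)$ and $\ell\ge 0$ give $\gamma_n^1=-(\ell(x,m_n)+m_n\ell_m(x,m_n))\le -\ell(x,0)\le 0$, hence $\gamma^1\le 0$; with $\gamma^2:=\mu^{\mathrm{ac}}\ge 0$ in $L^1$ this is the decomposition required by the definition of $\cK$. Distributional passage to the limit in $\partial_t\phi_n+\nu\Delta\phi_n\ge\gamma_n$ against nonnegative test functions gives $\partial_t\phi+\nu\Delta\phi\ge\gamma$; testing this PDE against $\eta_\epsilon(t)\xi(x)$ with $\eta_\epsilon\ge 0$ concentrated near $T$, using $\phi_n(T,\cdot)=u_T$, and letting $\epsilon\to 0$ yields $\phi|_{t=T}\le u_T$ in the trace sense of Lemma~\ref{sec:relaxed-problem-1}. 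Thus $(\phi,\gamma)\in\cK$, and replacing $\gamma^{\mathrm{ac}}$ by $\min(\gamma^{\mathrm{ac}},-\ell(\cdot,0))$ on $\{D\phi=0\}$ if necessary places it in $\widetilde\cK$ without decreasing $J$, by (\ref{eq:46}).

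The decisive step is the upper semicontinuity $\limsup_n J(\phi_n,\gamma_n)\le J(\phi,\gamma)$. I would split $J(\phi_n,\gamma_n)=a_n+b_n$ with $a_n=\int_0^T\int_{\T^d}K(x,\gamma_n,D\phi_n)\,dx\,dt$ and $b_n=\int_{\T^d}m_0\phi_n(0^+)\,dx$ and treat each part. For $a_n$, Rockafellar's theorem on integral functionals of measures for a concave upper semicontinuous integrand (\cite{MR0310612}, Theorem~5, as already used in (\ref{eq:35})) gives the weakly-$*$ USC extension $\int K(x,g^{\mathrm{ac}},p)\,dx\,dt+\int K_\infty(x,dg^{\mathrm{sing}}/d\theta,0)\,d\theta$, where $K_\infty(x,\nu,q):=\lim_{t\to+\infty}K(x,t\nu,tq)/t$. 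The argument preceding (\ref{eq:30}) shows that for $\gamma\ge -\ell(x,0)$ the minimum of $\mu\mapsto \mu(\gamma+\ell(x,\mu))$ is $0$ and attained at $\mu=0$; consequently $K(x,t\nu,0)=0$ for every $t\ge 0$ and every $\nu\ge 0$, so $K_\infty(x,\nu,0)=0$ on $[0,\infty)$. Since $\gamma^{\mathrm{sing}}\ge 0$ and $D\phi$ carries no singular part, the singular term vanishes and $\limsup_n a_n\le \int_0^T\int_{\T^d}K(x,\gamma^{\mathrm{ac}},D\phi)\,dx\,dt$. For $b_n$, the proof of Lemma~\ref{sec:relaxed-problem-1} applied with $\xi=m_0\in\cC^1$, combined with the bounds of Lemma~\ref{sec:priori-estim-maxim-5}, shows that $t\mapsto \int_{\T^d}m_0\phi_n(t,\cdot)$ has uniformly bounded total variation on $[0,T]$; Helly's selection theorem together with the weak-$*$ convergence of $\phi_n$ identifies the pointwise limit with $t\mapsto \int m_0\,d\phi(t)$, so $b_n\to b:=\int_{\T^d}m_0\phi(0^+)$.

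Since $a_n+b_n\to M$ and $b_n\to b$, necessarily $a_n\to M-b$, and the USC estimate gives $M-b\le \int_0^T\int_{\T^d}K(x,\gamma^{\mathrm{ac}},D\phi)$; adding $b$ produces $M\le J(\phi,\gamma)\le \sup_\cK J=M$, which forces equality and shows that $(\phi,\gamma)$ is a maximizer. The hard part, I expect, will be the upper semicontinuity of $a_n$: $\gamma_n^2$ is only bounded in $L^1$, so its weak-$*$ limit may carry a genuinely singular part, and the Rockafellar recession-function analysis is unavoidable. It is precisely the combination $K\le 0$, $\mu^{\mathrm{sing}}\ge 0$ and $K_\infty(x,\cdot,0)\equiv 0$ on $[0,\infty)$ that allows the singular contribution to be discarded in the limit.
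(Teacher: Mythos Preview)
Your proposal follows the same four–step strategy as the paper: extract weak limits of the maximizing sequence of Lemma~\ref{sec:priori-estim-maxim-5}, check that the limit pair lies in $\cK$, establish upper semicontinuity of $\int K(x,\gamma_n,D\phi_n)$, handle the boundary term $\int m_0\phi_n(0)$, and combine. Your treatment of the $a_n$–part via Rockafellar with recession functions is correct and is essentially the paper's ``convex conjugate'' argument made more explicit; the observation that $K_\infty(x,\nu,0)=0$ for $\nu\ge 0$ is precisely what is needed to discard the nonnegative singular part of $\gamma$.

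There is, however, a genuine gap in your treatment of $b_n=\int_{\T^d} m_0\phi_n(0)$. Helly's selection theorem yields a subsequence converging pointwise everywhere to \emph{some} BV function $\tilde\zeta$, but it does not force $\tilde\zeta(0)=\tilde\zeta(0^+)$. Since $\tilde\zeta$ and $t\mapsto\int m_0\,d\phi(t)$ agree only almost everywhere, the identification $\tilde\zeta(0)=\int m_0\phi(0^+)$ fails in general when $0$ is a jump point of the limit. Thus the asserted convergence $b_n\to b$ is not justified by Helly, and your subsequent deduction ``$a_n\to M-b$'' does not follow.

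The paper avoids this by proving only the inequality $\limsup_n b_n\le b$, obtained directly: testing $\partial_t\phi_n+\nu\Delta\phi_n\ge\gamma_n^1$ against $m_0$ and using the uniform $L^\beta$ and $L^{q^*}$ bounds on $D\phi_n$ and $\gamma_n^1$ gives
\[
\zeta_n(t)\ge\zeta_n(0)-C\bigl(t^{1/\beta^*}+t^{1/q}\bigr),\qquad t\in[0,T],
\]
whence, passing to the weak limit for a.e.\ $t$ and then letting $t\downarrow 0$, $\zeta(0^+)\ge\limsup_n\zeta_n(0)$. This inequality (together with your USC bound on $a_n$) already yields
\[
M=\lim_n(a_n+b_n)\le\limsup_n a_n+\limsup_n b_n\le \int_0^T\!\!\int_{\T^d}K(x,\gamma^{\mathrm{ac}},D\phi)\,dx\,dt+b=J(\phi,\gamma)\le M,
\]
which is the conclusion. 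So the fix is simply to replace the Helly argument by this one–sided estimate.
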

\begin{proof} $\;$
\paragraph{Step 1}
Consider the maximizing sequence $\phi_n$ for problem (\ref{eq:19}) described  in Lemma \ref{sec:priori-estim-maxim-5} and call $m_n$ the function such that $\cA(\phi_n)= \cA(\phi_n, m_n)$. 
Also, let the functions $\gamma_n^1$ and $\gamma_n ^2$ be  defined  as in Lemma \ref{sec:priori-estim-maxim-5}. We know that
\begin{displaymath}
\frac {\partial \phi_n} {\partial t}  + \nu \Delta \phi_n = \gamma_n\ge \gamma_n^1+ \gamma_n^2,
\end{displaymath}
and $(\phi_n,\gamma_n)\in \widetilde K$. The definition of $\gamma_n^1$ ensures that $ \gamma_n^1\le -\ell(\cdot,0)$.
\\
Up to the  extraction of  a subsequence, we may assume that $\phi_n\rightharpoonup \phi $ in $L^\beta(0,T, W^{1,\beta}(\T^d))$,  $m_n\rightharpoonup m$ in $L^q((0,T)\times \T^d)$,  $\gamma_n^1  \rightharpoonup \gamma^1$ in $L^{q^*}((0,T)\times \T^d)$,  with $ \gamma^1\le -\ell(\cdot,0)$ a.e.,
 $\gamma_n^2  \rightharpoonup \tilde \gamma^2$ in $\cM((0,T)\times \T^d)$ $*$. It is clear that $\tilde \gamma^2$ is a positive Radon measure that we write $\tilde \gamma^2= \gamma^2+ \gamma^{\rm sing}$. We see that 
\begin{displaymath}
\frac {\partial \phi} {\partial t}  + \nu \Delta \phi \ge \gamma= \gamma^1+ \gamma^2.  
\end{displaymath}
Thus for any function $\xi$ in $Lip(\T^d)$, the function $t\mapsto \int_{\T^d} \xi(x) \phi (t,x) dx$ has a BV representative.
Moreover, since $t\mapsto \zeta_n(t)=\int_{\T^d}\xi(x) \phi_n (t,x) dx$ is bounded in $L^\infty((0,T))$ by $C \|\xi\|_{L^\infty(\T^d)}$, 
where $C$ is independent of $\xi$,
we can assume  (up to the extraction of  a subsequence) 
 that $\zeta_n\rightharpoonup \zeta$ in $L^\infty((0,T))$  weak *, and  $\|\zeta\|_{L^\infty((0,T))} \le C \|\xi\|_{L^\infty(\T^d)}$. 
Since for all smooth function $\psi: [0,T]\to \R$, we know that $\int_0^T \int_{\T^d} \xi(x) \phi_n (t,x) \psi(t) dx dt$ tends to $\int_0^T \int_{\T^d} \xi(x) \phi (t,x) \psi(t)dx dt$,
we see that $ \zeta= \int_{\T^d}\xi(x) \phi (\cdot,x) dx$. Therefore $\left|\int_{\T^d} \xi(x) \phi (t,x) dx\right |\le C \|\xi\|_{L^\infty(\T^d)}$. This shows that
$(\phi,\gamma)\in \cK$.
\paragraph{Step 2}
It can be proved that the map $ (\phi,\gamma)\in \cK  \mapsto - \int_0^T \int_{\T^d} K(x,  -\gamma^{\rm ac}(t,x), -D\phi(t,x)) dxdt$ is the 
restriction to $ \cK$ of the  convex conjugate of the map 
\begin{displaymath}
  (\mu, z)\in E_1 \mapsto
     \int_0^T \int_{\T^d} \tilde L (x,\mu(t,x),z(t,x) ) dx dt. 
\end{displaymath}
Therefore, the map  $ (\phi,\gamma)\in \cK  \mapsto \int_0^T \int_{\T^d} K(x,  \gamma^{\rm ac}(t,x), D\phi(t,x)) dxdt$ is upper semi-continuous for the weak * topology of $ L^\beta(0,T;W^{1,\beta}(\T^d))\times \cM$, and therefore,
\begin{equation}
  \label{eq:59}
\int_0^T \int_{\T^d} K(x,  \gamma^{\rm ac}(t,x), D\phi(t,x)) dxdt \ge \limsup_{n\to \infty}  \int_0^T \int_{\T^d} K(x,  \gamma_n(t,x), D\phi(t,x)) dxdt.
\end{equation}
\paragraph{Step 3}
Let $\zeta_n = \int_{\T^d } m_0(x)\phi_n(t,x)dx$ and  $\zeta= \int_{\T^d } m_0(x)\phi(t,x)dx$. We have seen that  $\zeta_n\rightharpoonup \zeta$ in $L^\infty((0,T))$  weak *; from the inequation satisfied by $\phi_n$,  
\begin{displaymath}
  \frac {d \zeta_n}{dt} (t)- \nu\int_{T^d  } Dm_0(x) D\phi_n(t,x)dx  -\int_{\T^d }  (\gamma_n^1 (t,x) +\gamma_n^2 (t,x) ) m_0(x) dx \ge 0,
\end{displaymath}
which implies that 
\begin{displaymath}
  \frac {d \zeta_n}{dt} (t)- \nu\int_{T^d  } Dm_0(x) D\phi_n(t,x)dx  -\int_{\T^d }  \gamma_n^1 (t,x)  m_0(x) dx \ge 0.
\end{displaymath}
Thanks to the a priori bounds on $\phi_n$ and $\gamma_n ^1$, this implies that 
\begin{displaymath}
  \zeta_n(t)\ge \zeta_n(0)- C  ( t^{\frac 1 {\beta ^*} } + t^{\frac 1 {q}}),\quad \forall t\in [0,T].
\end{displaymath}
Letting $n$ tend to $+\infty$, 
\begin{displaymath}
  \zeta(t)\ge \limsup_{n\to \infty} \zeta_n(0)- C  ( t^{\frac 1 {\beta ^*} } + t^{\frac 1 {q}}), \quad \hbox{a.e. } t\in [0,T].
\end{displaymath}
Hence $ \zeta(0)\ge \limsup_{n\to \infty} \zeta_n(0)$, i.e. $ \int_{\T^d } m_0(x)\phi(t,x)dx\ge \limsup_{n\to \infty}\int_{\T^d } m_0(x)\phi_n(t,x)dx$.
\paragraph{Step 4} By combining the results of steps 2 and 3, we see that 
\begin{displaymath}
  J(\phi,\gamma)\ge \limsup_{n\to \infty} \cA(\phi_n)= \sup_{\tilde \phi\in \cK_0 }\cA(\tilde \phi)=\sup_{ (\tilde \phi, \tilde \gamma) \in \cK }J(\tilde \phi,\tilde \gamma),
\end{displaymath}
Using the fact that $\gamma^1\le -\ell(\cdot,0)$ a.e. and  (\ref{eq:30}), we can always decrease $\gamma^2$  where $D\phi=0$ in such a way that $ (\phi,\gamma)\in \widetilde \cK$ and the value of $J(\phi,\gamma)$ is preserved.
\end{proof}

\section{Weak solutions to the system of PDEs }
\label{sec:weak-solut-syst}
\begin{definition}
  \label{sec:weak-solut-syst-1}
A pair $(\phi,m)\in L^\beta(0,T;W^{1,\beta}(\T^d))\times L^q((0,T)\times \T ^d)$ with $m\ge 0$ almost everywhere,   is a weak solution of (\ref{eq:1})-(\ref{eq:3}) if
\begin{enumerate}
\item   $  m^{-\alpha } | D\phi|^\beta   1_{m>0} \in L^1((0,T)\times \T ^d)$, $D\phi(t,x)=0$ a.e. in the region $\{(t,x): m(t,x)=0\}$,
   $  m^{1-\alpha } | D\phi|^\beta  \in L^1((0,T)\times \T ^d)$ and  
  \[\int_0^T \int_{\T^d} m(t,x) L \left(x, m( t,x), H_p(x, m(t,x), D\phi(t,x))   \right ) dxdt <+\infty.\]
\item The following: 
  \begin{equation}
    \label{eq:60}
    \begin{split}
&    \frac{\partial \phi} {\partial t} (t,x)  + \nu \Delta \phi(t,x)  +  H
    (x, m(t,x), D \phi(t,x) )     + m(t,x) H_m(x, m(t,x), D \phi(t,x) ) \ge 0,\\ &\phi(T,x)\le u_T(x),      
    \end{split}
  \end{equation}
  holds in the sense of distributions, with the convention that 
  \begin{equation}
    \label{eq:61}
 H(x, m, p )     + m H_m(x, m,p )=\ell(x,0) \quad \hbox{if} \quad  m=0 \hbox{ and }p=0.
  \end{equation}
\item The following: 
  \begin{equation}
    \label{eq:62}
    \begin{split}
    &      \ds \frac{\partial m} {\partial t} (t,x)  - \nu \Delta  m(t,x) +  \diver   (m(t,\cdot)H_p(\cdot, m(t,\cdot), D\phi(t,\cdot)) )(x)  =0 ,
\\ &    m(0,x)= m_0(x) 
    \end{split}
\end{equation}
  holds in the sense of distributions
\item There exists a constant $C$ such that for each function $\xi\in Lip(\T^d)$,  \[ \|\int_{\T^d} \xi(x)\phi(t,x)dx\|_{L^\infty(0,T)}\le C \|\xi\|_{L^\infty (\T^d)}.\]
\item The following identity holds 
  \begin{equation}
    \label{eq:63}
    \begin{split}
      0=& \ds \int_{0}^T\int_{\T^d}  m(t,x) L (x, m(t,x),  H_p(x, m(t,x), D\phi(t,x)))dx dt
      +\int_{0}^T \int_{\T^d} m^2 (t,x) H_m(x, m(t,x), D\phi(t,x)) dx dt 
      \\ &\ds +\int_{\T^d} m(T,x)   u_T( x)  dx -\int_{\T^d} m_0(x)   \phi(0^+, x)  dx
    \end{split}
  \end{equation}
\end{enumerate}
\end{definition}

\begin{theorem}
  \label{sec:weak-solut-syst-2}
Let $(m^*, z^*)\in \cK_1$ be the minimizer of (\ref{eq:23}) and  $(\phi^*, \gamma^*)\in \widetilde \cK$ be a maximizer of (\ref{eq:45}). 
Then $(\phi^*, m^*)$ is a  weak solution of (\ref{eq:1})-(\ref{eq:3}), and $ z^*(t,x)= m^*(t,x)H_p(x, m^*(t,x),D \phi^*(t,x) )$, 
$\gamma ^{*, {\rm ac}}(t,x) \ge -H(x,  m^*(t,x), D \phi^*(t,x)) - m^*(t,x)H_m(x,  m^*(t,x), D \phi^*(t,x))$ almost everywhere.\\
Conversely, any weak solution $(\bar \phi,\bar m)$ of (\ref{eq:1})-(\ref{eq:3}) is such that the pair $(\bar m,   \bar m H_p(x, \bar m,D \bar \phi ) )$ is the minimizer of (\ref{eq:23}) and $(\bar \phi, \bar \gamma)$ is a maximizer of (\ref{eq:45}), with \[ \bar \gamma(t,x)= -H(x, \bar m(t,x), D\bar \phi(t,x)) - \bar m(t,x)H_m(x, \bar m(t,x), D\bar \phi(t,x)),\]
with the convention (\ref{eq:61}).
\end{theorem}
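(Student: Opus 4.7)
My plan is to identify weak solutions with optimal pairs by coupling the duality equality $\sup J = \min \cB$ (from Proposition~\ref{sec:relaxed-problem-2} and Lemma~\ref{sec:two-optim-probl-2}) with the equality case of Lemma~\ref{sec:relaxed-problem-3}.

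\textbf{Forward direction.} Let $(\phi^*,\gamma^*)$ and $(m^*,z^*)$ be optimal. I would apply Lemma~\ref{sec:relaxed-problem-3} to these data; combining (\ref{eq:49}) at $t\to 0$ with (\ref{eq:50}) at $t\to T$ (across almost every $t$) yields (\ref{eq:51}) as a $\le$ inequality. Using the pointwise bound $K(x,\gamma^{*,{\rm ac}},D\phi^*)\le m^*(\gamma^{*,{\rm ac}}+H(x,m^*,D\phi^*))$ from the definition (\ref{eq:29}) of $K$, together with the Fenchel inequality $\widetilde L(x,m^*,z^*)\ge -z^*\cdot D\phi^* + m^*H(x,m^*,D\phi^*)$ from (\ref{eq:22}), the identity $J(\phi^*,\gamma^*) = \cB(m^*,z^*)$ forces both (\ref{eq:51}) and that pointwise bound on $K$ to be equalities almost everywhere. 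The equality in (\ref{eq:51}) triggers the last assertion of Lemma~\ref{sec:relaxed-problem-3}, yielding $z^* = m^*H_p(\cdot,m^*,D\phi^*)$ a.e. The a.e. equality $K(x,\gamma^{*,{\rm ac}},D\phi^*) = m^*(\gamma^{*,{\rm ac}}+H)$ means $m^*(t,x)$ achieves the minimum defining $K$; strict convexity of $\mu\mapsto\mu H(x,\mu,p)$ in H2 together with Remark~\ref{sec:two-optim-probl-4} then forces $D\phi^*=0$ wherever $m^*=0$ and gives $\gamma^{*,{\rm ac}} + H(x,m^*,D\phi^*) + m^*H_m(x,m^*,D\phi^*) \ge 0$ a.e. (with the convention (\ref{eq:61})). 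Coupled with $\partial_t\phi^* + \nu\Delta\phi^* \ge \gamma^* \ge \gamma^{*,{\rm ac}}$ from $(\phi^*,\gamma^*)\in\cK$, this produces (\ref{eq:60}) and the claimed inequality on $\gamma^{*,{\rm ac}}$. Equation (\ref{eq:62}) then follows from $(m^*,z^*)\in\cK_1$ after inserting $z^* = m^*H_p$; the integrability requirements in point 1 are read off the finiteness of $\int\widetilde L$ and of the $L^1$ part of $\gamma^*$; property 4 comes from $\phi^*\in L^\infty(0,T;\cM(\T^d))$; and rearranging the equality version of (\ref{eq:51}) with $z^* = m^*H_p$ and (\ref{eq:21}) produces the energy identity (\ref{eq:63}).

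\textbf{Converse direction.} Given a weak solution $(\bar\phi,\bar m)$, I would set $\bar z = \bar m H_p(\cdot,\bar m,D\bar\phi)$ and $\bar\gamma = -H(\cdot,\bar m,D\bar\phi) - \bar m H_m(\cdot,\bar m,D\bar\phi)$, extended by (\ref{eq:61}) on $\{\bar m=0\}$. Then $(\bar m,\bar z)\in\cK_1$ by (\ref{eq:62}), and the splitting $\bar\gamma = \bar\gamma^1+\bar\gamma^2$ as in Lemma~\ref{sec:priori-estim-maxim-5} has $\bar\gamma^1 = -\ell(\cdot,\bar m)-\bar m\ell_m(\cdot,\bar m)\in L^{q^*}$ by H2, while $\bar\gamma^2 = (1-\alpha)|D\bar\phi|^\beta\bar m^{-\alpha}\mathbf{1}_{\{\bar m>0\}}$ is nonnegative and integrable: testing (\ref{eq:60}) against $1$ yields $\int\bar\gamma \le \int u_T - \int\bar\phi(0^+,\cdot)$, which is finite by point 4 of the weak solution definition, and subtracting $\int\bar\gamma^1$ bounds $\int\bar\gamma^2$. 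Hence $(\bar\phi,\bar\gamma)\in\cK$. By construction $\bar\gamma$ satisfies the first-order condition in (\ref{eq:29}) at $\mu=\bar m$ exactly, so strict convexity gives $K(x,\bar\gamma,D\bar\phi) = -\bar m^2 H_m(\cdot,\bar m,D\bar\phi)$ a.e. Plugging into (\ref{eq:44}) and combining with the energy identity (\ref{eq:63}) and $\widetilde L(x,\bar m,\bar z) = \bar m L(x,\bar m,H_p(\cdot,\bar m,D\bar\phi))$ from (\ref{eq:21}) yields $J(\bar\phi,\bar\gamma) = \cB(\bar m,\bar z)$. The chain $J(\bar\phi,\bar\gamma)\le\sup J = \min\cB \le \cB(\bar m,\bar z) = J(\bar\phi,\bar\gamma)$ then forces $(\bar\phi,\bar\gamma)$ to maximize (\ref{eq:45}) and $(\bar m,\bar z)$ to minimize (\ref{eq:23}).

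\textbf{Main obstacle.} The delicate point in the forward direction is the pointwise identification step: the single scalar equality $J(\phi^*,\gamma^*) = \cB(m^*,z^*)$ has to be upgraded into two a.e. pointwise equalities simultaneously, after which one still has to extract a first-order \emph{inequality} covering both regimes $\{m^*>0\}$ and $\{m^*=0\}$, invoking Remark~\ref{sec:two-optim-probl-4} to make the convention (\ref{eq:61}) consistent with the variational analysis. One also needs to be careful that the singular part of $\gamma^*$ only contributes to $\partial_t\phi^* + \nu\Delta\phi^* \ge \gamma^*$ in the sense of distributions and plays no role in $J$, whose integrand depends solely on $\gamma^{*,{\rm ac}}$.
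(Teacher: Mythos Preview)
Your proposal is correct and follows essentially the same approach as the paper. In the forward direction you match the paper exactly: the duality equality $J(\phi^*,\gamma^*)=\cB(m^*,z^*)$ is sandwiched against (\ref{eq:49})--(\ref{eq:50}) and the pointwise bound $K\le m^*(\gamma^{*,\rm ac}+H)$, forcing both to be equalities a.e., after which Remark~\ref{sec:two-optim-probl-4} handles the set $\{m^*=0\}$ just as in the paper.

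Your converse is a mild streamlining of the paper's argument. The paper first applies Lemma~\ref{sec:relaxed-problem-3} to $(\bar\phi,\bar\gamma)$ and $(m^*,z^*)$ to obtain $\cB(m^*,z^*)\ge\cB(\bar m,\bar z)$, and only then separately computes $\max J=J(\bar\phi,\bar\gamma)$. You instead compute $J(\bar\phi,\bar\gamma)=\cB(\bar m,\bar z)$ directly from $K(x,\bar\gamma,D\bar\phi)=-\bar m^2 H_m$ and (\ref{eq:63}), then close with the single sandwich $J(\bar\phi,\bar\gamma)\le\sup J=\min\cB\le\cB(\bar m,\bar z)=J(\bar\phi,\bar\gamma)$. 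This avoids a second invocation of Lemma~\ref{sec:relaxed-problem-3} and is arguably cleaner; the content is the same. One small point: your integrability check for $\bar\gamma^2$ via testing (\ref{eq:60}) against $1$ is unnecessary, since $\bar\gamma^2=(1-\alpha)\bar m^{-\alpha}|D\bar\phi|^\beta\mathbf{1}_{\{\bar m>0\}}\in L^1$ is already part of point~1 of Definition~\ref{sec:weak-solut-syst-1}.
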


\begin{proof}
Let $(m^*, z^*)\in \cK_1$ be the minimizer of (\ref{eq:23}), (which implies that  $m^*\in L^q((0,T)\times \T^d)$ and that $ \int_0^T \int_{\T^d} \widetilde L (x,m^*(t,x), z^*(t,x)) dx dt <+\infty$) and  $(\phi^*, \gamma^*)\in \widetilde \cK$ be a maximizer of (\ref{eq:45}). 
We know that $ \int_0^T \int_{\T^d}  K(x, \gamma ^{*,\rm ac}(t,x), D \phi^* (t,x)) dx dt >-\infty$. Hence,
\begin{displaymath}
  \begin{split}
&     \int_0^T \int_{\T^d} \widetilde L (x,m^*(t,x), z^*(t,x)) dx dt  + \int_{\T^d}     m^*(T,x) u_T(x) dx \\
=
&\int_0^T \int_{\T^d} K(x, \gamma^{*,{\rm ac}}(t,x), D\phi^*(t,x)) dxdt +\int_{\T^d} m_0(x) \phi^*(0,x) dx \\
\le   &\int_0^T \int_{\T^d}   m^* (t, x)  \left(  \gamma^{*,{\rm ac}}(t,x)   +  H(x, m^*(t,x), D\phi^*(t,x))\right) dxdt +\int_{\T^d} m_0(x) \phi^*(0,x) dx
  \end{split}
\end{displaymath}
But from Lemma \ref{sec:relaxed-problem-3} and especially (\ref{eq:49})-(\ref{eq:50}), we see that the latter inequality is in fact an equality; then,
 from the last part of  Lemma \ref{sec:relaxed-problem-3}, this yields that $z^*(t,x)=m^*(t,x)H_p(x, m^*(t,x),D\phi^*(t,x))$
and that 
\begin{displaymath}
  \widetilde L (x,m^*(t,x), z^*(t,x))=\widetilde L \left(x, m^*( t,x), m^*(t,x) H_p(x, m^*(t,x), D\phi^*(t,x)\right ) 
\end{displaymath}
 almost everywhere.
We have proved that $m^*H_p(\cdot, m^*,D\phi^*)\in  L^1((0,T)\times \T ^d)$ and that (\ref{eq:62}) holds.
\\
Moreover
\begin{displaymath}
\int_0^T \int_{\T^d} K(x, \gamma^{*,{\rm ac}}(t,x), D\phi^*(t,x)) dxdt
=   \int_0^T \int_{\T^d}   m^* (t, x)  \left(  \gamma^{*,{\rm ac}}(t,x)   +  H(x, m^*(t,x), D\phi^*(t,x))\right) dxdt.
\end{displaymath}
This implies that 
\begin{equation}
  \label{eq:64}
   K(x, \gamma^{*,{\rm ac}}(t,x), D\phi^*(t,x))=  m^* (t, x)  \left(  \gamma^{*,{\rm ac}}(t,x)   +  H(x, m^*(t,x), D\phi^*(t,x))\right)
\end{equation}
almost everywhere. In view of Remark~\ref{sec:two-optim-probl-4}, this implies that $D\phi^*(t,x)=0$ almost everywhere in the region where
$m^* (t, x) =0$, and that 
\begin{displaymath}
 \gamma^{*,{\rm ac}}(t,x)=  -H(x, m^*(t,x), D\phi^*(t,x)) - m^*(t,x)H_m(x, m^*(t,x), D\phi^*(t,x)), \quad \quad \hbox{a.e. where }    m ^*(t,x)>0
\end{displaymath}
and with the convention (\ref{eq:61}), we see that
    \begin{displaymath}
 \gamma^{*,{\rm ac}}(t,x)  \ge   -H(x, m^*(t,x), D\phi^*(t,x)) - m^*(t,x)H_m(x, m^*(t,x), D\phi^*(t,x)), \quad \quad \hbox{a.e.} .         \end{displaymath}
These observation imply that $  (m^*)^{-\alpha } | D\phi^*|^\beta  1_{m^* >0} \in L^1((0,T)\times \T ^d)$.
Using also (\ref{eq:64}), we see that $  (m^*)^{1-\alpha } | D\phi^*|^\beta  \in L^1((0,T)\times \T ^d)$.
From the inequalities satisfied by $\gamma^{*,{\rm ac}}$, we see  that (\ref{eq:60}) holds.
\\
Finally, (\ref{eq:63}) is obtained by combining all the points above. We have proved that $(\phi^*, m^*)$ is a weak solution in the sense of Definition~\ref{sec:weak-solut-syst-1}.\\
\\

Suppose now that $(\bar \phi, \bar m)$ is a weak solution in the sense of Definition~\ref{sec:weak-solut-syst-1}. 
Let us choose $\bar \gamma(t,x)= -H(x, \bar m(t,x), D\bar \phi(t,x)) - \bar m(t,x)H_m(x, \bar m(t,x), D\bar \phi(t,x))$
always with the convention (\ref{eq:61}),
(we split $\bar \gamma$ as follows: $\bar \gamma= \bar \gamma^1+ \bar \gamma^2$,
where $\bar \gamma^1 = -\ell(\cdot, \bar m)-\bar m \frac {\partial \ell}{\partial m} (\cdot, \bar m)$,
$ \bar \gamma^2=  (1-\alpha ) \frac {|D\bar \phi|^\beta}{\bar m^\alpha} 1_{\{\bar m>0\}}$)
 and  
$\bar z(t,x)=\bar m(t,x)H_p(x, \bar m(t,x),D\bar \phi(t,x))$.
It is clear that $(\bar \phi, \bar \gamma)\in \widetilde \cK$ 
 and that $( \bar m,  \bar z)\in \cK_1$.\\

From the definition of $\bar \gamma$, we see that a.e., 
$\bar m(t,x) ={\rm argmin}_{\mu\ge 0} \left( \mu \bar \gamma(t,x) +\mu  H(x, \mu, D\bar \phi(t,x)) \right)$. Moreover, since a.e.,
$  H_p(x, \bar m(t,x),D\bar \phi(t,x)) =  {\rm argmin}_{\xi}    \left( \xi\cdot D\bar \phi(t,x)+ L(x, \bar m(t,x) , \xi) \right) $, we see that a.e. 
$(\bar m(t,x), \bar z(t,x) ) =  {\rm argmin}_{(\mu,z) }   \left( \mu \bar \gamma(t,x) +  z\cdot D\bar \phi(t,x)+ \widetilde L(x, \mu , z)\right) $, i.e.
\begin{equation}
  \label{eq:65}
K(x, \bar \gamma(t,x),  D\bar \phi(t,x))=  \bar m(t,x) \bar \gamma(t,x) +   \bar z(t,x)\cdot D\bar \phi(t,x)+ \widetilde L(x, \bar m(t,x) ,  \bar z(t,x)).
\end{equation}
Note also that $0\le      \int_0^T \int_{\T^d}  \bar m(t,x)\bar \gamma^2(t,x)   dx dt  <+\infty$ and that
\begin{displaymath}
 \int_0^T \int_{\T^d}  |\bar z(t,x)\cdot D\bar \phi(t,x)| dx dt  \le C   \int_0^T \int_{\T^d} \bar m ^{1-\alpha}(t,x)  |D \bar \phi(t,x)|^\beta  dx dt<+\infty.
\end{displaymath}

  Let $(m^* , z^*)$ be the solution of (\ref{eq:23}). 
From Lemma \ref{sec:relaxed-problem-3},
\begin{displaymath}
  \begin{split}
      \cB(m^*, z^*)=&   \int_0^T \int_{\T^d} \widetilde L(x, m^*(t,x), z^* (t,x)) dxdt +\int_{\T^d} m^* (T,x) u_T (x) dx   \\
  \ge&  \int_0^T \int_{\T^d}    m^*(t,x) \left( \bar \gamma (t,x) + H(x,   m^*(t,x), D\bar \phi (t,x) ) \right) dx dt+ \int_{\T^d}  m_0(x) \bar \phi (0,x) dx   .
\end{split}
\end{displaymath}
Hence, 
\begin{displaymath} \begin{split}
      \cB(m^*, z^*)\ge  & \int_0^T \int_{\T^d}   K (x,  \bar \gamma (t,x),  D\bar \phi (t,x) ) dx dt + \int_{\T^d}  m_0(x) \bar \phi (0,x) dx\\
 =&  \int_0^T \int_{\T^d}    \bar m(t,x) \left( \bar \gamma (t,x) + H(x,   \bar m(t,x), D\bar \phi (t,x) ) \right) dx dt+ \int_{\T^d}  m_0(x) \bar \phi (0,x) dx,
\end{split}
\end{displaymath}
where the first line comes from (\ref{eq:65}) and the definition of $K$,
 and the last line comes from the definition of $\bar \gamma$. \\
 Finally, from the latter inequality and (\ref{eq:63}), we deduce that
 \begin{displaymath} \begin{split}
      \cB(m^*, z^*)\ge&  \int_0^T \int_{\T^d} \widetilde L(x, \bar m(t,x), \bar z (t,x)) dxdt +\int_{\T^d} \bar m (T,x) u_T (x) dx \\
= & \cB(\bar m, \bar z).
\end{split}
\end{displaymath}
Therefore $(\bar m, \bar z)$ achieves the minimum in (\ref{eq:23}) and $\bar m=m^*$, $\bar z=z^*$.
 \\
It remains to prove that $(\bar \phi,\bar \gamma)$ achieves the maximum in (\ref{eq:45}).  We deduce from Lemma \ref{sec:two-optim-probl-2}, Proposition \ref{sec:relaxed-problem-2} and the latter point that 
\begin{displaymath}
  \begin{split}
  \max_{(\phi,\gamma)\in \cK } J(\phi,\gamma)= \cB(\bar m, \bar z)&=\int_0^T \int_{\T^d} \widetilde L(x, \bar m(t,x), \bar z (t,x)) dxdt +\int_{\T^d} \bar m (T,x) u_T (x) dx \\
 &=  - \int_0^T \int_{\T^d}   \bar m ^2(t,x)   H_m(x, \bar m(t,x), D \bar\phi (t,x)) dxdt +\int_{\T^d} \bar m_0 (x) \bar \phi(0^+, x)  dx ,
  \end{split}
\end{displaymath}
where the last line comes from (\ref{eq:63}) and the definition of $\bar z$. But, using the definition of $\bar \gamma$,
\begin{displaymath}
  \begin{split}
  - \bar m ^2(t,x)   H_m(x, \bar m(t,x), D \bar\phi (t,x))=& \bar m(t,x) \bar \gamma (t,x) +\bar m(t,x)  H(x, \bar m(t,x) ,D \bar\phi (t,x) )      \\
= & \bar m(t,x) \bar \gamma (t,x) +    \bar z(t,x)\cdot D \bar\phi (t,x)+   \widetilde L (x, \bar m(t,x) , \bar z (t,x) )       \\
=& K(x, \bar \gamma(t,x), D\bar\phi(t,x) )
  \end{split}
\end{displaymath}
where the second line is obtained using the definition of $\bar z$ and the third line is obtained using (\ref{eq:65}).
Combining the latter two observations, we see that
\begin{displaymath}
  \max_{(\phi,\gamma)\in \cK } J(\phi,\gamma)= \int_0^T \int_{\T^d}K(x, \bar \gamma(t,x), D\bar\phi(t,x) )dxdt +\int_{\T^d} \bar m_0 (x) \bar \phi(0^+, x)  dx = J(\bar \phi,\bar \gamma),
\end{displaymath}
which concludes the proof.
\end{proof}

\begin{theorem}
  \label{sec:weak-solut-syst-3}
There exists a unique  weak solution of (\ref{eq:1})-(\ref{eq:3}).
\end{theorem}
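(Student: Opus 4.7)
The plan is to handle existence and uniqueness separately, with existence following essentially for free from the machinery already built, and uniqueness obtained by bootstrapping from the uniqueness of $m^*$ in Lemma \ref{sec:two-optim-probl-2}.

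For existence, I would simply combine the results of the previous sections. Lemma \ref{sec:two-optim-probl-2} provides the unique minimizer $(m^*,z^*)\in\cK_1$ of $\cB$, Proposition \ref{sec:exist-solut-relax-1} provides a maximizer $(\phi^*,\gamma^*)\in\widetilde \cK$ of $J$, and the forward direction of Theorem \ref{sec:weak-solut-syst-2} then asserts that $(\phi^*,m^*)$ is a weak solution in the sense of Definition \ref{sec:weak-solut-syst-1}. No further work is needed.

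For uniqueness, let $(\phi_1,m_1)$ and $(\phi_2,m_2)$ be two weak solutions. First, by the converse direction of Theorem \ref{sec:weak-solut-syst-2}, both $m_i$ coincide with the unique minimizer $m^*$ of (\ref{eq:23}), and both $\bar z_i := m^* H_p(\cdot,m^*, D\phi_i)$ coincide with the unique $z^*$. Since $H(x,m,\cdot)$ is strictly concave for $m>0$ (from (H1) with $\beta>1$), the map $p\mapsto H_p(x,m,p)$ is a bijection of $\R^d$ onto $\R^d$ for $m>0$, so from $m^* H_p(\cdot,m^*,D\phi_1) = m^* H_p(\cdot,m^*,D\phi_2)$ one gets $D\phi_1 = D\phi_2$ almost everywhere on $\{m^*>0\}$; on $\{m^*=0\}$ the equality $D\phi_1=D\phi_2=0$ is built into item 1 of Definition \ref{sec:weak-solut-syst-1}. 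Hence $D\phi_1=D\phi_2$ a.e.\ on $(0,T)\times\T^d$, which (since $\T^d$ is connected) gives $\phi_1(t,x)-\phi_2(t,x)=c(t)$ for a.e.\ $(t,x)$, with $c\in L^\infty(0,T)$ by item 4 of Definition \ref{sec:weak-solut-syst-1}.

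To pin down $c(t)$, the plan is to apply Lemma \ref{sec:relaxed-problem-3} to the pair $(\phi_i,\bar\gamma_i)\in\widetilde\cK$ (with $\bar\gamma_i$ given by (\ref{eq:61}) as in Theorem \ref{sec:weak-solut-syst-2}) and the pair $(m^*,z^*)\in\cK_1$. Because the energy identity (\ref{eq:63}) holds for each weak solution, a direct rewriting shows that hypothesis (\ref{eq:51}) of Lemma \ref{sec:relaxed-problem-3} is satisfied, so the inequalities (\ref{eq:49}) and (\ref{eq:50}) are in fact equalities for a.e.\ $t\in(0,T)$. For such a $t$, writing these equalities for $\phi_1$ and for $\phi_2$ and subtracting (all the terms involving $D\phi_i$, $H$, $H_p$, $H_m$, $\tilde L$ and $z^*$ are identical since $D\phi_1=D\phi_2$) yields
\begin{equation*}
\int_{\T^d} m^*(t,x)\bigl(\phi_1(t,x)-\phi_2(t,x)\bigr)\,dx = 0,
\end{equation*}
i.e.\ $c(t)\int_{\T^d} m^*(t,x)\,dx = 0$. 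Since total mass is conserved, $\int_{\T^d} m^*(t,x)\,dx = \int_{\T^d} m_0 = 1$, so $c(t)=0$ for a.e.\ $t$, and hence $\phi_1=\phi_2$ a.e. The main technical point, and the one I expect to require the most care, is checking that the equality case of Lemma \ref{sec:relaxed-problem-3} really applies to a weak solution (i.e.\ verifying that (\ref{eq:63}) translates exactly into (\ref{eq:51}) once $\bar\gamma$ is substituted via (\ref{eq:61}), and that the integrability requirements of Lemma \ref{sec:relaxed-problem-3} are met through items 1 and 5 of Definition \ref{sec:weak-solut-syst-1}); everything else is an algebraic consequence.
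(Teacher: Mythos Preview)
Your proposal is correct and follows essentially the same route as the paper: existence by combining Lemma~\ref{sec:two-optim-probl-2}, Proposition~\ref{sec:exist-solut-relax-1} and Theorem~\ref{sec:weak-solut-syst-2}; uniqueness by first identifying $m_1=m_2=m^*$ and $z_1=z_2=z^*$ via the converse part of Theorem~\ref{sec:weak-solut-syst-2}, then deducing $D\phi_1=D\phi_2$ (injectivity of $p\mapsto H_p(x,m,p)$ on $\{m^*>0\}$, item~1 of Definition~\ref{sec:weak-solut-syst-1} on $\{m^*=0\}$), and finally using that (\ref{eq:63}) is exactly (\ref{eq:51}) so that (\ref{eq:49})--(\ref{eq:50}) become equalities, which together with $\int_{\T^d} m^*(t,\cdot)=1$ forces $c(t)=0$. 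The only cosmetic difference is that the paper works with the equality in (\ref{eq:50}) and therefore first records $\phi_1(0^+,\cdot)=\phi_2(0^+,\cdot)$ from (\ref{eq:63}), whereas subtracting the equality (\ref{eq:49}) (where the boundary term $\int m^*(T)u_T$ is manifestly the same for both) gives $\int_{\T^d} m^*(t,x)(\phi_1-\phi_2)(t,x)\,dx=0$ directly.
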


 \begin{proof}
   Existence is a direct consequence of Theorem \ref{sec:weak-solut-syst-2}, Proposition \ref{sec:exist-solut-relax-1} and Lemma \ref{sec:two-optim-probl-2}. \\
 From Theorem \ref{sec:weak-solut-syst-2}, we also see that   any weak solution $(\bar \phi,\bar m)$ of (\ref{eq:1})-(\ref{eq:3}) is such that the pair $(\bar m,   \bar m H_p(x, \bar m,D \bar \phi ) )$ is the minimizer of (\ref{eq:23}). Thus $( \bar m, \bar m H_p(x, \bar m,D \bar \phi ))$ is unique.
\\
Consider now two weak solutions $(\bar\phi_1, \bar m)$ and $(\bar\phi_2, \bar m)$ of  (\ref{eq:1})-(\ref{eq:3}). 
We know that $\bar m H_p(x, \bar m,D \bar \phi_1 )=\bar m H_p(x, \bar m,D \bar \phi_2 )$: this implies that 
$ |D\bar \phi_1(t,x)|^{\beta-2 }D\bar \phi_1(t,x)= |D\bar \phi_2(t,x)|^{\beta-2} D\bar \phi_2(t,x)$ at almost every $(t,x)$ such that $\bar m (t,x)>0$:
hence at almost every $(t,x)$ such that $\bar m (t,x)>0$,   $ |D\bar \phi_1(t,x)|= |D\bar \phi_2(t,x)|$ and finally $ D\bar \phi_1(t,x)=  D\bar \phi_2(t,x)$.
Moreover, $D\bar \phi_1(t,x)=D\bar \phi_2(t,x)=0$ at almost every $(t,x)$ such that $\bar m(t,x)=0$.
 Therefore,  $D\bar \phi_1(t,x)=D\bar \phi_2(t,x)$ at almost every $(t,x)$.
This means that $\bar \phi_1 -\bar \phi_2$ only depends on $t$, and that $\bar \gamma_1=\bar \gamma_2$ a.e., where  $\bar \gamma_i(t,x)= -H(x, \bar m(t,x), D\bar \phi_i(t,x)) - \bar m(t,x)H_m(x, \bar m(t,x), D\bar \phi_i(t,x))$, using convention (\ref{eq:61}).
\\
 Using (\ref{eq:63}), we deduce from the previous points that $\bar \phi_1(t=0^+,\cdot)=\bar \phi_2(t=0^+,\cdot)$ a.e. in $\T^d$.
\\
We set $\bar \gamma=  \bar \gamma_1=\bar \gamma_2\in L^1 ((0,T)\times \T^d)$  and $     \bar z= \bar m H_p (x, \bar m, D\bar \phi_1)=\bar m H_p (x, \bar m, D\bar \phi_2)$. Going back to the proof of Lemma \ref{sec:relaxed-problem-3}, we see that for almost all $t$, 
both $(\bar \phi_1, \bar \gamma)$ and $(\bar \phi_2, \bar \gamma)$ achieve the equality in (\ref{eq:50}) with $(m,z)=(\bar m, \bar z)$.
From the previous points, this implies that
\begin{displaymath}
    \int_{\T^d } \bar m(t, x) \bar \phi_1 (t, x) dx=     \int_{\T^d } \bar m(t, x) \bar \phi_2 (t, x) dx
\end{displaymath}
for almost every $t$. Since $\bar \phi_1 (t, x)-\bar \phi_2 (t, x)$ only depends on $t$ and $\bar m(t)$ is a probability measure, 
 the latter implies that $\phi_1=\phi_2$ holds almost everywhere.
\end{proof}

\section*{\bf Acknowledgements}
 The first author  was partially funded  by the ANR projects ANR-12-MONU-0013 and ANR-12-BS01-0008-01.

{\small
\bibliographystyle{amsplain}
\bibliography{wmftc}

\def\cprime{$'$} \def\cprime{$'$}
\providecommand{\bysame}{\leavevmode\hbox to3em{\hrulefill}\thinspace}
\providecommand{\MR}{\relax\ifhmode\unskip\space\fi MR }
\providecommand{\MRhref}[2]{%
  \href{http://www.ams.org/mathscinet-getitem?mr=#1}{#2}
}
\providecommand{\href}[2]{#2}
\begin{thebibliography}{10}

\bibitem{MR3135339}
Y.~Achdou, \emph{Finite difference methods for mean field games},
  Hamilton-{J}acobi equations: approximations, numerical analysis and
  applications (P.~Loreti and N.~A. Tchou, eds.), Lecture Notes in Math., vol.
  2074, Springer, Heidelberg, 2013, pp.~1--47.

\bibitem{YAML}
Y.~Achdou and M.~Lauri{\`e}re, \emph{On the system of partial differential
  equations arising in mean field type control}, DCDS A (September 2015).

\bibitem{MR3037035}
A.~Bensoussan and J.~Frehse, \emph{Control and {N}ash games with mean field
  effect}, Chin. Ann. Math. Ser. B \textbf{34} (2013), no.~2, 161--192.

\bibitem{MR3134900}
A.~Bensoussan, J.~Frehse, and P.~Yam, \emph{Mean field games and mean field
  type control theory}, Springer Briefs in Mathematics, Springer, New York,
  2013.

\bibitem{MR3116016}
P.~Cardaliaguet, G.~Carlier, and B.~Nazaret, \emph{Geodesics for a class of
  distances in the space of probability measures}, Calc. Var. Partial
  Differential Equations \textbf{48} (2013), no.~3-4, 395--420.

\bibitem{cardaliaguet2014second}
P.~Cardaliaguet, J~Graber, A.~Porretta, and D.~Tonon, \emph{Second order mean
  field games with degenerate diffusion and local coupling}, arXiv preprint
  arXiv:1407.7024 (2014).

\bibitem{MR3091726}
R.~Carmona and F.~Delarue, \emph{Mean field forward-backward stochastic
  differential equations}, Electron. Commun. Probab. \textbf{18} (2013), no.
  68, 15.

\bibitem{MR3045029}
R.~Carmona, F.~Delarue, and A.~Lachapelle, \emph{Control of {M}c{K}ean-{V}lasov
  dynamics versus mean field games}, Math. Financ. Econ. \textbf{7} (2013),
  no.~2, 131--166.

\bibitem{MR1022305}
R.~J. DiPerna and P.-L. Lions, \emph{Ordinary differential equations, transport
  theory and {S}obolev spaces}, Invent. Math. \textbf{98} (1989), no.~3,
  511--547.

\bibitem{gomes2014existence}
D.~A Gomes and H.~Mitake, \emph{Existence for stationary mean field games with
  quadratic hamiltonians with congestion}, arXiv preprint arXiv:1407.8267
  (2014).

\bibitem{MR3195844}
D.~A. Gomes and J.~Sa{\'u}de, \emph{Mean field games models---a brief survey},
  Dyn. Games Appl. \textbf{4} (2014), no.~2, 110--154.

\bibitem{MR2269875}
J-M. Lasry and P-L. Lions, \emph{Jeux \`a champ moyen. {I}. {L}e cas
  stationnaire}, C. R. Math. Acad. Sci. Paris \textbf{343} (2006), no.~9,
  619--625.

\bibitem{MR2271747}
\bysame, \emph{Jeux \`a champ moyen. {II}. {H}orizon fini et contr\^ole
  optimal}, C. R. Math. Acad. Sci. Paris \textbf{343} (2006), no.~10, 679--684.

\bibitem{MR2295621}
\bysame, \emph{Mean field games}, Jpn. J. Math. \textbf{2} (2007), no.~1,
  229--260.

\bibitem{PLL}
P-L. Lions, \emph{Cours du {C}oll{\`e}ge de {F}rance},
  http://www.college-de-france.fr/default/EN/all/equ$_-$der/, 2007-2011.

\bibitem{MR0221595}
H.~P. McKean, Jr., \emph{A class of {M}arkov processes associated with
  nonlinear parabolic equations}, Proc. Nat. Acad. Sci. U.S.A. \textbf{56}
  (1966), 1907--1911.

\bibitem{porretta2014}
A.~Porretta, \emph{Weak solutions to {F}okker-{P}lanck equations and mean field
  games}, Archive for Rational Mechanics and Analysis (2014), 1--62 (English).

\bibitem{MR0310612}
R.~T. Rockafellar, \emph{Integrals which are convex functionals. {II}}, Pacific
  J. Math. \textbf{39} (1971), 439--469.

\bibitem{MR1451876}
\bysame, \emph{Convex analysis}, Princeton Landmarks in Mathematics, Princeton
  University Press, Princeton, NJ, 1997, Reprint of the 1970 original,
  Princeton Paperbacks.

\bibitem{MR1108185}
A-S. Sznitman, \emph{Topics in propagation of chaos}, \'{E}cole d'\'{E}t\'e de
  {P}robabilit\'es de {S}aint-{F}lour {XIX}---1989, Lecture Notes in Math.,
  vol. 1464, Springer, Berlin, 1991, pp.~165--251.

\end{thebibliography}
}

\end{document}